\def\l@subsection{\@tocline{2}{0pt}{2.5pc}{5pc}{}}
\DeclareSymbolFont{largesymbol}{OMX}{yhex}{m}{n}
\DeclareMathAccent{\Widehat}{\mathord}{largesymbol}{"62}
\newcommand*\di{\mathop{}\!\mathrm{d}}
\def\e{\epsilon}
\numberwithin{equation}{section}              
\newtheorem{theorem}{Theorem}[section]
\newtheorem{lemma}{Lemma}[section]
\newtheorem{proposition}{Proposition}[section]
\newtheorem*{proposition*}{Proposition}
\newtheorem*{corollary*}{Corollary}
\newtheorem{definition}{Definition}[section]
\newtheorem*{definitions*}{Definitions}
\newtheorem*{acknowledgements*}{Acknowledgements}
\newtheorem*{conjecture*}{\bf Conjecture}
\newtheorem*{example*}{\bf Example}
\theoremstyle{remark}
\newtheorem{remark}{\bf Remark}[section]
\begin{document}
\date{}                                     

\author{Cong Wang}
\address[C. Wang]{Department of Mathematics, Harbin Institute of Technology,  Harbin,	150001, P.R. China.}
\email{math\_congwang@163.com}

\author{Yu Gao}
\address[Y. Gao]{Department of Applied Mathematics, The Hong Kong Polytechnic University, Hung Hom, Kowloon, Hong Kong}
\email{mathyu.gao@polyu.edu.hk}

\author{Xiaoping Xue}
\address[X. Xue]{Department of Mathematics, Harbin Institute of Technology, Harbin,	150001, P.R. China.}
\email{xiaopingxue@hit.edu.cn}

\title[Space-time analyticity of NS]{Joint space-time analyticity of mild solutions to the Navier-Stokes equations}

\begin{abstract}
In this paper, we show the optimal decay rate estimates of the space-time derivatives and the joint space-time analyticity of solutions to the Navier-Stokes equations. As it is known from the Hartogs's theorem, for a complex function with two complex variables, the joint analyticity with respect to two variables can be derived from combining of analyticity with respect to each variable. However, as a function of two real variables for space and time, the joint space-time analyticity of solutions to the Navier-Stokes equations cannot be directly obtained from the combination of space analyticity and time analyticity. Our result seems to be the first quantitative result for the joint space-time analyticity of solutions to the Navier-Stokes equations, and the proof only involves real variable methods. Moreover, the decay rate estimates also yield the bounds on the growth (in time) of radius of space analyticity, time analyticity, and joint space-time analyticity of solutions. 
\end{abstract}

\maketitle

\section{Introduction}
The study of analyticity of solutions to partial differential equations has been a long history, and there are many applications of the analyticity of solutions, such as the solvability of backward equations \cite{zhang2020anote} and the control theory \cite{zhang2015observation,canzhang2017analyticity}. In the fluid dynamics, the radius of spatial analyticity can be used to measure the geometrically significant length scale of fluid flow \cite{grujic2001the} and to obtain Hausdorff length upper bounds of Navier-Stokes equations \cite{kukavica1996level}. Moreover, the analyticity of solutions accounts for the exponential convergence of the finite dimensional Galerkin method in the Ginzburg-Landau equation \cite{titi1993regularity}.
 
In this paper, we are going to study  the decay rate estimates for the space-time derivatives and the joint space-time analyticity of solutions to the incompressible Navier-Stokes equations in $\mathbb{R}^3$:
\begin{equation}\label{eq:NS}
\left\{
\begin{aligned}
&u_t-\Delta u+u\cdot\nabla u+\nabla \mathrm{p}=0,\quad x\in\mathbb{R}^3,~~t>0,\\
&\nabla\cdot u=0,\\
&u(\cdot,0)=u_0.
\end{aligned}\right.
\end{equation}  
Here, $u$ is the $\mathbb{R}^3$-valued velocity field, $\mathrm{p}$ stands for the scalar pressure, and $u_0$ is an initial datum in the critial space $L^3(\mathbb{R}^3)$. 
The space analyticity of the classical solutions to the Navier-Stokes equations is usually expected as a consequence of parabolic regularity; see, e.g., \cite{kahane1969,giga2002regularizing,bae2012analyticity,xu2020local}. The time analyticity of the solutions to the Navier-Stokes equations can be obtained via analytic semigroup properties and complex variables \cite{foias1989gevrey,giga1983time}. The first general pointwise time analyticity result for the Navier-Stokes equations was obtained recently in \cite{donghongjie2020jfa},  whose proof involves only real variable methods. 
From the Hartogs's theorem \cite{hartogs1906theorie} or Osgood's lemma \cite{osgood1899note}, if a function with several complex variables is analytic with respect to each variable, then it is analytic with respect to all variables. However, for functions with real variables, we do not have such a good property. Hence, combining the space analyticity and time analyticity of solutions to the Navier-Stokes equations does not imply the joint space-time analyticity.  
Moreover, as far as we know, there is no quantitative estimates for the Navier-Stokes equations in the previous literatures which imply the joint space-time analyticity of solutions.
The main purpose of this paper is to provide the quantitative decay rate estimates of the space-time derivatives of solutions to the Navier-Stokes equations that yield the joint space-time analyticity. The main results of this paper are as follows:
\begin{theorem}\label{thm:main}
Let $u_0\in L^{3}(\mathbb{R}^3)$ satisfy $\nabla\cdot u_0=0$, and $u(t)$ be the mild solution (see Definition \ref{def:mild}) to the Navier-Stokes equations \eqref{eq:NS}. Then the following statements hold:

\begin{enumerate}
\item[(i)] There exist $T>0$ and a constant $M>0$ independent of $\beta$ and $k$ and depending on $T$, such that 
\begin{align}\label{eq:analyticinequality}
\left\|D_x^\beta\partial_t^ku(t)\right\|_{L^q(\mathbb{R}^3)}\leq M^{|\beta|+k}\left(|\beta|+k\right)^{|\beta|+k}t^{-\frac{|\beta|}{2}-k-\frac{3}{2}(\frac{1}{3}-\frac{1}{q})}
\end{align}
for $3\leq q\leq\infty$, $ t\in(0,T]$, $\beta\in\mathbb{N}^3$ and $k\in\mathbb{N}$ with $|\beta|+k>0$. 
\item[(ii)] If $\left\|u_0\right\|_{L^3(\mathbb{R}^3)}$ is small enough, the mild solution $u$ exists globally, and there exists a positive constant $M$ independent of $t$, $\beta$ and $k$ such that \eqref{eq:analyticinequality} holds for any time $t>0$.
\end{enumerate}
As a consequence, solution $u$ satisfying \eqref{eq:analyticinequality} is joint space-time analytic for any $t>0.$ 
\end{theorem}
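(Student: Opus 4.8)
The plan is to work entirely with the mild (integral) formulation
\[
u(t)=e^{t\Delta}u_0-\int_0^t e^{(t-s)\Delta}\mathbb{P}\,\nabla\cdot(u\otimes u)(s)\,\mathrm{d}s=:e^{t\Delta}u_0+B(u,u)(t),
\]
where $\mathbb{P}$ is the Leray projector, and to estimate $D_x^\beta\partial_t^k u$ by an induction on $k$ with $\beta$ kept free. I would first record the two families of linear bounds that drive everything: the spatial heat estimate $\|D_x^\alpha e^{\tau\Delta}f\|_{L^q}\le C^{|\alpha|}|\alpha|^{|\alpha|/2}\tau^{-|\alpha|/2-\frac32(\frac1p-\frac1q)}\|f\|_{L^p}$, and, crucially, the time-analytic heat estimate $\|\partial_\tau^k e^{\tau\Delta}f\|_{L^q}=\|\Delta^k e^{\tau\Delta}f\|_{L^q}\le C^k k!\,\tau^{-k-\frac32(\frac1p-\frac1q)}\|f\|_{L^p}$, in which the Gaussian tames the multiplier $|\xi|^{2k}e^{-\tau|\xi|^2}$ so that only $k!$ (not $(2k)!$) appears. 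This second point is precisely why one must \emph{not} convert time derivatives into $2k$ spatial derivatives through the equation $u_t=\Delta u-\mathbb{P}\nabla\cdot(u\otimes u)$: doing so would cost a factor $(|\beta|+2k)^{|\beta|+2k}$ and destroy the claimed radius of time analyticity.

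Before treating time derivatives I would settle the base case $k=0$, namely the purely spatial estimate $\|D_x^\beta u(t)\|_{L^q}\le M^{|\beta|}|\beta|^{|\beta|}t^{-|\beta|/2-\frac32(\frac13-\frac1q)}$, by the standard Kato machinery: local existence of the mild solution (and, under smallness, global existence) together with the decay $\|u(t)\|_{L^q}\le Ct^{-\frac32(\frac13-\frac1q)}$, followed by induction on $|\beta|$ in which $D_x^\beta$ is distributed across $B(u,u)$ by the Leibniz rule, each factor is controlled by H\"older and the spatial heat bound, and the resulting time convolutions are summed.

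The heart of the proof is the induction step in $k$. To avoid the non-integrable singularity created when $\partial_t$ hits $\Delta e^{(t-s)\Delta}\mathbb{P}\nabla\cdot$ at $s=t$, I would restart the solution at a fixed base time $t_0$ (ultimately $t_0=t/2$), writing for $t>t_0$
\[
u(t)=e^{(t-t_0)\Delta}u(t_0)-\int_{t_0}^t e^{(t-s)\Delta}\mathbb{P}\nabla\cdot(u\otimes u)(s)\,\mathrm{d}s,
\]
and then integrating by parts in $s$ via $\partial_t e^{(t-s)\Delta}=-\partial_s e^{(t-s)\Delta}$. The singular boundary terms at $s=t$ cancel, and iterating $k$ times yields
\[
\partial_t^k u(t)=\Delta^k e^{(t-t_0)\Delta}u(t_0)-\sum_{j=0}^{k-1}\Delta^{k-1-j}e^{(t-t_0)\Delta}\mathbb{P}\nabla\cdot\,\partial_{t_0}^{j}(u\otimes u)(t_0)-\int_{t_0}^t e^{(t-s)\Delta}\mathbb{P}\nabla\cdot\,\partial_s^{k}(u\otimes u)(s)\,\mathrm{d}s.
\]
Applying $D_x^\beta$ (which commutes with every operator above) and estimating term by term: the first term uses the time-analytic heat bound and the $k=0$ decay of $u(t_0)$; the boundary sum uses the spatial heat bound together with the induction hypothesis, since the Leibniz expansion of $\partial_{t_0}^{j}(u\otimes u)$ involves only $\partial_{t_0}^{i}u$ with $i\le j<k$, and $\sum_{i}\binom{j}{i}i!(j-i)!=(j+1)!$ keeps the factorials at the correct level; the choice $t_0\sim t$ turns all powers of $t-t_0$ and $t_0$ into the stated power of $t$.

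The main obstacle is the last integral, because the Leibniz expansion of $\partial_s^k(u\otimes u)$ contains the top-order terms $u\otimes\partial_s^k u$ and $\partial_s^k u\otimes u$, so $\|D_x^\beta\partial_t^k u\|_{L^q}$ appears on both sides. I would isolate these two terms and absorb them into the left-hand side using that the nonlinear convolution $\int_{t_0}^t\|e^{(t-s)\Delta}\mathbb{P}\nabla\cdot\|_{L^r\to L^q}\,\|u(s)\|_{L^3}\,\mathrm{d}s$ is small: the kernel contributes an integrable $(t-s)^{-1/2}$ and $\|u(s)\|$ is controlled on $[t/2,t]$, so the coefficient can be made $\le 1/2$ by the local smallness of the mild solution (for part (ii), by the global smallness of $\|u_0\|_{L^3}$). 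The remaining terms $\partial_s^i u\otimes\partial_s^{k-i}u$ with $1\le i\le k-1$ are handled by the induction hypothesis exactly as the boundary sum. Tracking the geometric constants through the absorption and the sum $\sum_{i=0}^k\binom{k}{i}i!(k-i)!=(k+1)!$ closes the induction with a bound of the form $M^{|\beta|+k}(|\beta|+k)^{|\beta|+k}t^{-|\beta|/2-k-\frac32(\frac13-\frac1q)}$, proving (i); part (ii) is identical once the global mild solution and its global decay are available. Finally, since $(|\beta|+k)^{|\beta|+k}\le C^{|\beta|+k}(|\beta|+k)!$ by Stirling, \eqref{eq:analyticinequality} makes the joint Taylor series of $u$ in $(x,t)$ converge on a neighbourhood of each point $(x_0,t_0)$ with $t_0>0$, which gives the joint space-time analyticity.
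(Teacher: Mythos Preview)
Your overall strategy---integration by parts in $s$ to transfer $\partial_t$ onto $(u\otimes u)$, restart at $t_0=t/2$, and absorb the top-order piece via smallness---is a legitimate route, and it is genuinely different from what the paper does. The paper never integrates by parts; instead it studies $D_x^\beta\partial_t^k\bigl(t^ku(t)\bigr)$ rather than $t^kD_x^\beta\partial_t^k u$, inserts the binomial identity $t^k=\sum_j\binom{k}{j}s^{k-j}(t-s)^j$ into the Duhamel integral, and runs an induction on the \emph{total} order $|\beta|+k$. It proves the slightly sharper intermediate bound $\|D_x^\beta\partial_t^k(t^ku)\|_{L^q}\le M^{|\beta|+k-\delta}(|\beta|+k)^{|\beta|+k-1}t^{-\mu_q}$, and only at the very end converts back to $\partial_t^k u$.

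There is, however, a genuine gap in your bookkeeping that prevents the induction from closing as written. You invoke $\sum_{i=0}^k\binom{k}{i}i!(k-i)!=(k+1)!$ as if it ``keeps the factorials at the correct level'', but this identity \emph{loses} a factor $k+1$ relative to the target $k!$; equivalently, in the $n^n$ normalisation one has $\sum_{i}\binom{k}{i}i^{i}(k-i)^{k-i}\sim c\sqrt{k}\,k^{k}$, again with a growing prefactor. Over an induction in $k$ this excess does not go away, and no amount of smallness in $\theta$ absorbs a factor that grows with $k$. This is exactly why the paper proves the stronger exponent $|\beta|+k-1$ in the intermediate inequality: with that extra $-1$, Kahane's lemma (their Lemma~\ref{lmm:multisequences}) gives $\sum_{\gamma+\gamma'=\kappa}\binom{\kappa}{\gamma}|\gamma|^{|\gamma|-1}|\gamma'|^{|\gamma'|-1}\le\lambda|\kappa|^{|\kappa|-1}$ with no loss, and the induction closes. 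Your scheme can be repaired the same way, but you must build that sub-factorial exponent into the induction hypothesis from the start.

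A second, smaller issue: your induction ``on $k$ with $\beta$ kept free'' is not quite sufficient. When you put $D_x^\beta$ on $\partial_s^k(u\otimes u)$ in the integral term, the full Leibniz expansion produces factors $D_x^\gamma\partial_s^k u$ with $0<|\gamma|<|\beta|$, which are at time-order $k$ and hence not covered by an induction purely on $k$. You need either a nested induction (outer on $k$, inner on $|\beta|$) or, as the paper does, a single induction on $|\beta|+k$. Relatedly, the ``two top-order terms'' you isolate for absorption are really the two terms $D_x^\beta\partial_s^k u\otimes u$ and $u\otimes D_x^\beta\partial_s^k u$ after the full space-time Leibniz expansion; your text reads as if only the time Leibniz is applied, which would leave $D_x^\beta$ on the heat kernel and produce a non-integrable $(t-s)^{-1/2-|\beta|/2}$ near $s=t$.
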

Inequality \eqref{eq:analyticinequality} seems to be the first quantitative estimate for the joint space-time derivatives of solutions to the Navier-Stokes equations, which yields the joint space-time analyticity. Notice that quantitative estimates on the space-time analyticity of solutions are important in the applications to the null-controllability of parabolic evolutions over measurable sets; see, e.g., \cite{zhang2015observation,canzhang2017analyticity}.
Comparing with the previous results about analyticity of solutions to the Navier-Stokes equations (see the next paragraph below), the above estimate shows some novelties: (i) the quantitative joint space-time decay rate estimates and analyticity are obtained; (ii) since the constant $M$ is independent of time for small initial data, the bounds on the growth (in time) of radius of space analyticity, time analyticity, and joint space-time analyticity of solutions are also obtained (see Remark \ref{rmk:radius}).
Moreover, notice that the decay rate for a solution to the heat equation in $\mathbb{R}^3$ (see \eqref{eq:estimate3}) is given by:
\begin{align}\label{eq:heatequationjoint}
\left\|D_x^\beta\partial_t^k\left[G(\cdot,t)\ast f\right]\right\|_{L^q}\leq C^{\frac{|\beta|}{2}+k}\left(|\beta|+k\right)^{\frac{|\beta|}{2}+k+\frac{3}{2}(\frac{1}{p}-\frac{1}{q})}t^{-\frac{|\beta|}{2}-k-\frac{3}{2}(\frac{1}{p}-\frac{1}{q})}\|f\|_{L^p},
\end{align}
where $f\in L^p(\mathbb{R}^3)$, $3\leq p\leq q\leq\infty$, $\beta\in\mathbb{N}^3$, $k\in\mathbb{N}$, and $C$ is a constant independent of $\beta,k$ and $t$. Comparing with \eqref{eq:heatequationjoint}, the decay rate in \eqref{eq:analyticinequality} is almost optimal, except for some possibility of improvement for the index on $(|\beta|+k)$. Note that the joint space-time analyticity also implies the unique continuation property of the mild solutions.

The proof of Theorem \ref{thm:main} is based on some elementary estimates for the space-time derivatives of the heat kernel (see Lemma \ref{lmm:heatkernelestimate2}). Instead of proving \eqref{eq:analyticinequality} directly, we will apply a technique from a recent paper \cite{donghongjie2020jfa}, and change the position of the decay rate $t^k$ (corresponding to time derivatives) from the right hand side of \eqref{eq:analyticinequality} to the left to show 
\begin{align}\label{eq:pretk}
\left\|D_x^\beta\partial_t^k\left(t^ku(t)\right)\right\|_{L^{q}}\leq \hat{M}^{|\beta|+k-\delta}(|\beta|+k)^{|\beta|+k-1}t^{-\frac{|\beta|}{2}-\frac{3}{2}\left(\frac{1}{3}-\frac{1}{q}\right)},\quad 3\leq q\leq\infty
\end{align}
for some $0<\delta<1$. In \cite{donghongjie2020jfa}, Dong and Zhang proved the time analyticity of solutions to the Navier-Stokes equations with the assumption $u\in L^\infty\left(\mathbb{R}^d\times [0,1]\right)$ $(d\in\mathbb{N})$ (see \cite[Theore 3.1]{donghongjie2020jfa}). They obtained (see  \cite[Proposition 3.4]{donghongjie2020jfa})
\begin{align*}
\left\|\partial^k_t(t^ku(t))\right\|_{L^\infty}\leq N^{k-\frac{2}{3}}k^{k-\frac{2}{3}}+C\int_0^t(t-s)^{-\frac{1}{2}}\left\|\partial^k_t(s^ku(s))\right\|_{L^\infty}\di s
\end{align*}
for some constant $C$ depending only on $d$, and some sufficiently large constant $N$ depending on $d$ and $\|u\|_{L^\infty}$, but independent of $k$. After one step of iteration, the above inequality becomes the Gronwall type inequality, which gives 
\[
\sup_{t\in(0,1]}\left\|\partial^k_t(t^ku(t))\right\|_{L^\infty}\leq N^{k-1/2}k^{k-2/3}.
\] 
This implies the time analyticity. Notice that more regularity assumptions for the initial data are essential to obtain the boundedness of solutions, i.e., $u\in L^\infty\left(\mathbb{R}^d\times [0,1]\right)$. In this paper, we do not assume $u\in L^\infty\left(\mathbb{R}^3\times[0,T]\right)$, and the initial data are only required in $L^3(\mathbb{R}^3)$. In this case, we could obtain the following inequality:
\begin{align*}
\left\|\partial^k_t(t^ku(t))\right\|_{L^q}\leq M^{k-\delta}k^{k-1}t^{\frac{3}{2}\left(\frac{1}{3}-\frac{1}{q}\right)}+C\theta\int_0^t(t-s)^{-\frac{1}{2}-\frac{3}{2}\left(1-\frac{1}{a}\right)}s^{-\frac{3}{2}\left(-1+\frac{1}{a}+\frac{1}{3}\right)}\left\|\partial_s^k\left(s^ku(s)\right)\right\|_{L^q}\di s,
\end{align*}
where $\theta$ is a constant depends on initial datum and the local existing time $T$ (see Theorem  \ref{thm:contraction} and Remark \ref{rmk:theta} for details).
The above inequality cannot imply the boundedness of $\left\|\partial^k_t(t^ku(t))\right\|_{L^q}$ for $3\leq q<\infty$ from the Gr\"onwall type inequality. We will use the smallness of $\theta$ to overcome this difficulty (see the proof of Proposition \ref{pro:two}). To prove  \eqref{eq:pretk}, we will only use induction for $|\beta|+k$ and a bootstrapping method without any contraction argument.  One of the difficulties lies in keeping the coefficient $\hat{M}$ invariant as $|\beta|+k$ increasing. In order to overcome this difficulty, we use the property of functions in the following form:
\begin{align*}
f(\hat{M}):=\sup_{|\beta|+k\geq 1}\left(\frac{c_1}{\hat{M}}\right)^{|\beta|+k}(|\beta|+k)^{c_2}=\sup_{|\beta|+k\geq 1}\left(\frac{c_1(|\beta|+k)^{\frac{c_2}{|\beta|+k}}}{\hat{M}}\right)^{|\beta|+k},
\end{align*}
where $c_1$, $c_2>0$ are constants. Since $(|\beta|+k)^{\frac{c_2}{|\beta|+k}}$ is bounded, we see that
$f(\hat{M})\to 0$ as $\hat{M}\to+\infty$. The calculation in the proof of \eqref{eq:pretk} shows that (see Proposition \ref{pro:one} and Proposition \ref{pro:two})
\begin{align*}
\left\|D_x^\beta\partial_t^k\left(t^ku(t)\right)\right\|_{L^{q}}\leq& \left(\frac{c_1}{\hat{M}}\right)^{|\beta|+k}(|\beta|+k)^{c_2}\hat{M}^{|\beta|+k-\delta}(|\beta|+k)^{|\beta|+k-1}t^{-\frac{|\beta|}{2}-\frac{3}{2}\left(\frac{1}{3}-\frac{1}{q}\right)}\\
\leq& f(\hat{M})\hat{M}^{|\beta|+k-\delta}(|\beta|+k)^{|\beta|+k-1}t^{-\frac{|\beta|}{2}-\frac{3}{2}\left(\frac{1}{3}-\frac{1}{q}\right)}.
\end{align*}
By the property of $f(\hat{M})$, we can find a constant $\hat{M}$ big enough to make $f(\hat{M})<1$, which leads to \eqref{eq:pretk} for any $\beta$ and $k$.

The analyticity and Gevery-class regularity of solutions to the Navier-Stokes equations have been studied for several decades. Without the decay rate estimates, analytic semigroup method was used by Giga in \cite{giga1983time} to prove the time analyticity and space analyticity of the weak solutions to the Navier-Stokes equations with zero-boundry condition in a bounded domain of $\mathbb{R}^n$ $(n\geq 2)$.  In \cite{foias1989gevrey}, Foias and Temam provided a method by using Fourier analysis to show the time analyticity of solutions to the Navier-Stokes equations with space periodicity  boundary condition  in a Gevrey class of functions (for the space variable) in space $\mathbb{R}^2$ and $\mathbb{R}^3$.
The Fourier splitting method was introduced by Schobek  in \cite{schonbek1995large} to obtain the decay rate estimates of the homogeneous $H^m(\mathbb{R}^2)$ norms for solutions to the Navier-Stokes equations in $\mathbb{R}^2$ with initial data in $H^m\cap L^1(\mathbb{R}^2)$ ($m\geq3$), and the analyticity of solutions was not proved. Her method was generalized to higher dimensional cases \cite{schonbek1996onthe,oliver2000remark}. Especially, Oliver and Titi \cite{oliver2000remark} used the method based on the Gevrey estimates to present upper bounds for the decay rate of higher order derivatives of solutions to the Navier-Stokes equations in $\mathbb{R}^n$ $(n\geq 1)$:
\begin{align}\label{eq:spacerate}
\|(-\Delta)^{\frac{m}{2}}u(t)\|_{L^2(\mathbb{R}^n)}^2\leq C\left(\frac{2m}{e}\right)^{2m}(1+t)^{-\gamma-m},
\end{align}
Where $\gamma>0$ and $m>0$ are two real numbers. They obtain the above estimate under the conditions $\|u(t)\|_{L^2}^2\leq M/(1+t)^\gamma$ and $\liminf_{t\to \infty}\|u(t)\|_{H^r}<\infty$ for some constants $M>0$ and $r>n/2$. The above decay rate estimate yields explicit bounds on the growth of the radius of space analyticity of the solution in time.
Based on some contraction arguments (or  Gronwall type estimates), space analyticity \cite{kahane1969, giga2002regularizing} and time analyticity \cite{donghongjie2020jfa} for the solutions to the Navier-Stokes equations were obtained. Giga and Sawada \cite{giga2002regularizing} obtained the following decay rate estimates for the space derivatives (see \cite[Theorem 1.1]{giga2002regularizing}):
\begin{align}\label{eq:spacerate1}
\left\|D_x^\beta u(t)\right\|_{L^q(\mathbb{R}^n)}\leq K_1\left(K_2|\beta|\right)^{|\beta|}t^{-\frac{|\beta|}{2}-\frac{n}{2}\left(\frac{1}{n}-\frac{1}{q}\right)},
\end{align}
where the constants $K_1$ and $K_2$ are independent of $\beta$, and $n\leq q\leq \infty$. The space analyticity follows from the above estimates.
Under the boundedness condition  $|u(x,t)|\leq C$ for $(x,t)\in\mathbb{R}^d\times[0,1]$, Dong and Zhang obtained   (see \cite[Theorem 3.1]{donghongjie2020jfa}):
\[
\sup_{t\in (0,1]}t^k\|\partial_t^ku(t)\|_{L^\infty}\leq N^kk^k
\] 
for any $k\in\mathbb{N}$ and some large constant $N$ independent of $k$ (essentially depending on time). The above estimates yield the time analyticity without the bounds on the growth of radius of time analyticity.
One can also find space analyticity results in \cite{bae2012analyticity} for initial data in critical Besov space in $\dot{B}_{p,q}^{\frac{3}{p}-1}(\mathbb{R}^3)$ with $1\leq p<\infty$ and $1\leq q\leq\infty$, and in \cite{xu2020local}  with initial data in $BMO(\mathbb{R}^n)$ $(n\geq2)$. 

We note here that from the Navier-Stokes equations, taking time derivative $\partial_t^k$ ($k$ times) of the solution $u$ corresponds to taking the space derivative $\Delta^k$ ($2k$ times) of $u$. If we change the time derivatives into space derivatives, combining the estimate for space derivatives \eqref{eq:spacerate} or \eqref{eq:spacerate1} yields the following joint space-time Gevery type estimate of solutions to the Navier-Stokes equation \eqref{eq:NS}:
\begin{align*}
\left\|D_x^\beta\partial_t^ku(t)\right\|_{L^q(\mathbb{R}^3)}\leq M^{|\beta|+2k}\left(|\beta|+2k\right)^{|\beta|+2k}t^{-\frac{|\beta|}{2}-k-\frac{3}{2}(\frac{1}{3}-\frac{1}{q})}.
\end{align*}
Although the above Gevery type estimate shows the decay estimates for the $L^q(\mathbb{R}^3)$ ($3\leq q\leq\infty$) norm of space-time derivatives of the solution, the joint space-time analyticity cannot be obtained from it. 

Next, we show the outline of the proof of Theorem \ref{thm:main}:

(1) Instead of proving inequality \eqref{eq:analyticinequality} directly, we will focus on proving \eqref{eq:pretk}, which yields \eqref{eq:analyticinequality} (see Theorem \ref{thm:analytic}).

(2) We will first use induction for $|\beta|+k$ to prove inequality \eqref{eq:pretk} with $3\leq q<\infty$:
\begin{enumerate}
\item[(a)] The first step for induction, i.e., the case for $|\beta|+k=1$, can be verified directly from the regularity estimate of heat kernel; see \eqref{eq:estimate3}.
\item[(b)] Assume \eqref{eq:pretk} holds for $|\beta|+k=L-1$, $L\geq 2$, $3\leq p<+\infty$ and some constant $M$, and we are going to prove inequality \eqref{eq:pretk} for $|\beta|+k=L$ and $3\leq q<+\infty$. Direct calculation from the mild solution (see \eqref{eq:mild}) shows
\begin{equation}\label{eq:esti}
\begin{aligned}
\left\|D_x^\beta\partial_t^k\left(t^ku(t)\right)\right\|_{L^q}\leq& \left\|D_x^\beta\partial_t^k\left[t^kG(\cdot,t)\ast u_0\right]\right\|_{L^q}\\
&+\left\|D_x^\beta\partial_t^k\left[t^k\int_0^t\nabla G(\cdot,t-s)\ast\left[\mathcal{P}(u\otimes u)(s)\right]\di s\right]\right\|_{L^q}.
\end{aligned}
\end{equation}
Here, $\mathcal{P}$ is the Helmholtz projection. For the first term in the right hand side of the above inequality, we can apply the space-time estimates for the heat kernel (see \eqref{eq:estimate3}) to obatin 
\begin{align*}
\left\|D_x^\beta\partial_t^k\left[t^kG(\cdot,t)\ast u_0\right]\right\|_{L^q}\leq  h_1(M)M^{|\beta|+k-\delta}(|\beta|+k)^{|\beta|+k-1}t^{-\frac{|\beta|}{2}-\frac{3}{2}\left(\frac{1}{3}-\frac{1}{q}\right)}
\end{align*}
for any $|\beta|+k>0$ and $3\leq q\leq \infty$. Here, $h_1(M)\to 0$ as $M\to\infty$ (see \eqref{firstterm} for details). 
The estimate of the second term is the most difficult part of the whole proof, which needs more careful and detailed calculations. It will be separated into two cases: $|\beta|>0$ and $|\beta|=0$. For both of these two cases, the target is to obtain the following inequality
\[
\sup_{0<s\leq t}{s^{\frac{|\beta|}{2}+\frac{3}{2}\left(\frac{1}{3}-\frac{1}{q}\right)}\left\|D_x^\beta\partial_s^k\left(s^ku(s)\right)\right\|_{L^q}}\leq h(M) M^{|\beta|+k-\delta}(|\beta|+k)^{|\beta|+k-1}
\]
for some function $h(M)$ satisfying $h(M)\to 0$ as $M\to\infty$, which implies \eqref{eq:pretk} for $3\leq q<\infty$, $|\beta|+k=L$ and $|\beta|>0$. The proof relies on Young's inequality, the estimates of heat kernel (see Lemma \ref{lmm:heatkernelestimate2}) and the fact that the Helmholtz operator $\mathcal{P}$ is a bounded operator from $L^p(\mathbb{R}^3)$ to $L^p(\mathbb{R}^3)$ for $1<p<\infty$ (see Proposition \ref{pro:one} for $|\beta|>0$ and Proposition \ref{pro:two} for $|\beta|=0$ for more details).
Notice that we can obtain the space analyticity of solutions from the case $|\beta|>0$, and the case $|\beta|=0$ corresponds to the time analyticity, which needs some restrictions for the existing time $T$ or the smallness of $\|u_0\|_{L^3}$ (see Remark \ref{rmk:theta}).
\end{enumerate}

(3) Finally, the case for $q=+\infty$ and $|\beta|+k=L$ can be proved by a bootstrapping argument, i.e., the results for $3\leq q<+\infty$ implies the result for $q=\infty$ (see Proposition \ref{pro:three}).

The rest of this paper is organized as follows. In Section \ref{sec:pre}, we will show some elementary space-time estimates for the heat kernel. The well-posedness and some useful estimates of mild solution of the Navier-Stokes equations will also be obtained in this section. In Section \ref{sec:analytic}, we will give the proof of the main result of Theorem \ref{thm:main}. Some useful results will be provided in Appendix \ref{app}.

\section{Preliminaries}\label{sec:pre}
In this section, we will present some useful lemmas. 
Let $G$ be the heat kernel in $\mathbb{R}^3$ given by 
\begin{align*}
	G(x,t)=\frac{1}{(4\pi t)^{\frac{3}{2}}}e^{-\frac{|x|^2}{4t}},\quad x\in\mathbb{R}^3,~t>0.
\end{align*}
We have the following estimates for the space-time derivatives of heat kernel:
\begin{lemma}\label{lmm:heatkernelestimate2}
Let $f\in L^p(\mathbb{R}^3)$, $k\in\mathbb{N}$, $\beta \in\mathbb{N}^3$ and $|\beta|+k>0$. Assume $m\in\mathbb{N}$ satisfying $0\leq m\leq k$. Then, there exists a constant $M_0>0$  independent of $\beta$ and $k$ such that the following inequalities hold for $3\leq q\leq +\infty$:
\begin{align}\label{eq:estimate1}
\left\|D_x^\beta\partial_t^k\left[t^mG(\cdot,t)\right]\right\|_{L^q}\leq M_0^{\frac{|\beta|}{2}+k}\left(|\beta|+k\right)^{\frac{|\beta|}{2}+k+\frac{3}{2}(1-\frac{1}{q})}t^{-\frac{|\beta|+2(k-m)}{2}-\frac{3}{2}(1-\frac{1}{q})},
\end{align}
\begin{align}\label{eq:estimate2}
\left\|D_x^\beta\partial_t^k\left[t^m\nabla G(\cdot,t)\right]\right\|_{L^q}\leq M_0^{\frac{|\beta|}{2}+k}\left(|\beta|+k\right)^{\frac{|\beta|+1}{2}+k+\frac{3}{2}(1-\frac{1}{q})}t^{-\frac{|\beta|+2(k-m)+1}{2}-\frac{3}{2}(1-\frac{1}{q})},
\end{align}
and for $1\leq p\leq q\leq\infty$, we have
\begin{align}\label{eq:estimate3}
\left\|D_x^\beta\partial_t^k\left[t^mG(\cdot,t)\ast f\right]\right\|_{L^q}\leq M_0^{\frac{|\beta|}{2}+k}\left(|\beta|+k\right)^{\frac{|\beta|}{2}+k+\frac{3}{2}(\frac{1}{p}-\frac{1}{q})}t^{-\frac{|\beta|+2(k-m)}{2}-\frac{3}{2}(\frac{1}{p}-\frac{1}{q})}\|f\|_{L^p},
\end{align}
\begin{align}\label{eq:estimate4}
\left\|D_x^\beta\partial_t^k\left[t^m\nabla G(\cdot,t)\ast f\right]\right\|_{L^q}\leq M_0^{\frac{|\beta|}{2}+k}\left(|\beta|+k\right)^{\frac{|\beta|+1}{2}+k+\frac{3}{2}(\frac{1}{p}-\frac{1}{q})}t^{-\frac{|\beta|+2(k-m)+1}{2}-\frac{3}{2}(\frac{1}{p}-\frac{1}{q})}\|f\|_{L^p}. 
\end{align}
Moreover,
\begin{align}\label{eq:key}
\lim_{t\to0}t^{\frac{3}{2}(\frac{1}{p}-\frac{1}{q})}\|G(\cdot,t)\ast f\|_{L^q}=0,\quad \forall ~q>p.
\end{align}
In particularly, for $p=3$ and $q=6$, we obtain from \eqref{eq:key} that
\begin{align}\label{eq:key1}
\lim_{t\to0}t^{\frac{1}{4}}\|G(\cdot,t)\ast f\|_{L^{6}}=0.
\end{align}
\end{lemma}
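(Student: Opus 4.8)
\textbf{Proof proposal for Lemma \ref{lmm:heatkernelestimate2}.}

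The plan is to reduce all five statements to a single master estimate for the $L^1$ and $L^\infty$ norms of $D_x^\beta\partial_t^k[t^m G(\cdot,t)]$, obtained by explicit computation with the Gaussian, and then to propagate these bounds through Young's convolution inequality and interpolation. First I would record the elementary scaling identity: writing $G(x,t)=t^{-3/2}\Phi(x/\sqrt t)$ with $\Phi(y)=(4\pi)^{-3/2}e^{-|y|^2/4}$, one has $\partial_t^k D_x^\beta[t^m G(\cdot,t)] = t^{m-k-|\beta|/2-3/2}\,P_{\beta,k}(x/\sqrt t)\,e^{-|x|^2/(4t)}$ for a polynomial $P_{\beta,k}$ whose degree and coefficient sizes must be controlled. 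Since each $\partial_t$ acting on a Gaussian brings down a factor that is a quadratic-over-$t$ plus lower order (precisely, $\partial_t(t^{-a}e^{-|x|^2/4t}) = t^{-a-1}(|x|^2/4t - a)e^{-|x|^2/4t}$), the combinatorics of $k$ applications of $\partial_t$ together with $|\beta|$ spatial derivatives of the Hermite type produces a polynomial of degree $O(|\beta|+k)$ with coefficients bounded by something like $C^{|\beta|/2+k}(|\beta|+k)!$-type growth; converting the factorial to $(|\beta|+k)^{|\beta|/2+k}$ via Stirling is where the exponent $\frac{|\beta|}{2}+k$ on $M_0$ and $(|\beta|+k)$ comes from. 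The key quantitative lemma here is a Gaussian moment bound: $\int_{\mathbb{R}^3}|y|^j e^{-|y|^2/4}\,dy \le C^j\, \Gamma\!\big(\tfrac{j+3}{2}\big)$ and $\sup_y |y|^j e^{-|y|^2/8} \le (Cj)^{j/2}$, which after combining with the coefficient count yields
\begin{align*}
\big\|D_x^\beta\partial_t^k[t^mG(\cdot,t)]\big\|_{L^1} &\le M_0^{\frac{|\beta|}{2}+k}(|\beta|+k)^{\frac{|\beta|}{2}+k}\,t^{-\frac{|\beta|+2(k-m)}{2}},\\
\big\|D_x^\beta\partial_t^k[t^mG(\cdot,t)]\big\|_{L^\infty} &\le M_0^{\frac{|\beta|}{2}+k}(|\beta|+k)^{\frac{|\beta|}{2}+k+\frac32}\,t^{-\frac{|\beta|+2(k-m)}{2}-\frac32}.
\end{align*}

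Once these two endpoint bounds are in hand, \eqref{eq:estimate1} follows by the Riesz--Thorin (or simply log-convex) interpolation $\|f\|_{L^q}\le \|f\|_{L^1}^{1/q}\|f\|_{L^\infty}^{1-1/q}$, which distributes the exponents exactly as claimed; \eqref{eq:estimate2} is the same computation applied to $\nabla G$ in place of $G$, picking up one extra spatial derivative and hence the $(|\beta|+1)/2$ in the exponent and one more half-power of $t^{-1}$. For \eqref{eq:estimate3} and \eqref{eq:estimate4} I would move the derivatives onto the kernel, $D_x^\beta\partial_t^k[t^mG(\cdot,t)\ast f]=(D_x^\beta\partial_t^k[t^mG(\cdot,t)])\ast f$, and apply Young's inequality $\|g\ast f\|_{L^q}\le \|g\|_{L^r}\|f\|_{L^p}$ with $1+\frac1q=\frac1r+\frac1p$, i.e.\ $\frac1r=1-(\frac1p-\frac1q)$; feeding the just-proved bound for $\|g\|_{L^r}$ with this value of $r$ gives the exponent $\frac32(\frac1p-\frac1q)$ in both the power of $(|\beta|+k)$ and the power of $t$, as stated. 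One should check that differentiation under the convolution integral is justified, which follows from the rapid decay of all space-time derivatives of $G$ away from $t=0$ together with $f\in L^p$.

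Finally, for \eqref{eq:key} I would use a density argument: the map $t\mapsto t^{\frac32(\frac1p-\frac1q)}\|G(\cdot,t)\ast f\|_{L^q}$ is uniformly bounded in $t$ by the $\beta=k=0$ case of \eqref{eq:estimate3}, so it suffices to prove the limit is $0$ for $f$ in the dense subset $C_c^\infty(\mathbb{R}^3)$ (or Schwartz class); for such $f$ one has $\|G(\cdot,t)\ast f\|_{L^q}\le \|f\|_{L^q}$ for all $q$ and $\|G(\cdot,t)\ast f\|_{L^q}$ stays bounded as $t\to0$, so $t^{\frac32(\frac1p-\frac1q)}\|G(\cdot,t)\ast f\|_{L^q}\to 0$ because the prefactor vanishes and $q>p$ makes the exponent strictly positive; the general case follows by approximating $f$ in $L^p$ and using the uniform bound to control the error. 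Statement \eqref{eq:key1} is the specialization $p=3$, $q=6$.

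\textbf{Main obstacle.} The genuinely delicate point is the combinatorial bookkeeping in the first paragraph: showing that $k$ time-derivatives and $|\beta|$ space-derivatives of the Gaussian produce coefficients no larger than $C^{|\beta|/2+k}(|\beta|+k)^{|\beta|/2+k}$ rather than the naive $(2k+|\beta|)!$-type bound one might fear. The crucial structural fact that makes the $\frac{|\beta|}{2}+k$ (rather than $|\beta|+k$) exponent work is that one time derivative is "worth" only one half-power in the heat scaling but costs effectively two orders in the Hermite polynomial, yet those two orders come from a single quadratic factor $|x|^2/t$, so the polynomial degree grows like $|\beta|+k$ and the Gaussian moment integral of that degree contributes a factor $(|\beta|+k)^{(|\beta|+k)/2}$ to the $L^1$ norm, which combines with the $t$-scaling to give the stated exponents. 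Getting this count tight — and uniform in $0\le m\le k$, which only shifts the power of $t$ and never hurts since $t^{m}$ with $m\le k$ is harmless — is where all the real work lies; everything after it is interpolation and Young's inequality.
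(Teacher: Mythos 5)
Your strategy is correct and is essentially the canonical one; note that the paper does not actually prove this lemma but defers to \cite[Proposition 2.1]{wang2021optimal}, and the route there is the same in spirit: pointwise/Hermite-type bounds on derivatives of the Gaussian, $L^1$--$L^\infty$ endpoints, and Young's inequality for the convolution statements. Your reductions are all sound: the two displayed endpoint bounds do imply \eqref{eq:estimate1} by $\|g\|_{L^q}\leq\|g\|_{L^1}^{1/q}\|g\|_{L^\infty}^{1-1/q}$ (and, importantly, they give the kernel bound for \emph{all} $1\leq r\leq\infty$, which is what Young's inequality with $1+\tfrac1q=\tfrac1r+\tfrac1p$ actually requires for \eqref{eq:estimate3}--\eqref{eq:estimate4}, since $r$ can fall below $3$); and the density argument for \eqref{eq:key} is standard and complete. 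The one place where the write-up is loose is the part you yourself flag as the main obstacle. Two corrections there: the polynomial $P_{\beta,k}$ has degree $|\beta|+2k$ (each $\partial_t$ contributes a quadratic factor), not $|\beta|+k$, and the resulting moment bound is $(|\beta|+2k)^{(|\beta|+2k)/2}=(|\beta|+2k)^{|\beta|/2+k}\leq C^{|\beta|/2+k}(|\beta|+k)^{|\beta|/2+k}$ --- it is this ``half the degree'' exponent, not a full factorial $(|\beta|+k)!\sim(|\beta|+k)^{|\beta|+k}$, that must come out of the coefficient count, and a genuine $(|\beta|+k)!$-sized coefficient would destroy the estimate. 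Rather than redoing the combinatorics of iterated $\partial_t$ on $t^{-a}e^{-|x|^2/4t}$ by hand, the cleanest way to close this is to write $\partial_t^k\left[t^mG\right]=\sum_{j=0}^{\min(m,k)}\binom{k}{j}\frac{m!}{(m-j)!}\,t^{m-j}\,\Delta^{k-j}G$ using $\partial_tG=\Delta G$ and Leibniz; each summand is then controlled by the classical spatial Hermite estimate $\|D_x^{\alpha}G(\cdot,t)\|_{L^1}\leq C^{|\alpha|}|\alpha|^{|\alpha|/2}t^{-|\alpha|/2}$ with $|\alpha|=|\beta|+2(k-j)$, all terms carry the same power $t^{m-k-|\beta|/2}$, and the sum of the combinatorial prefactors $\binom{k}{j}\frac{m!}{(m-j)!}\leq k^{2j}/j!$ against $(|\beta|+2(k-j))^{|\beta|/2+k-j}$ is bounded by $C^{|\beta|/2+k}(|\beta|+k)^{|\beta|/2+k}$ by an elementary estimate. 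With that substitution your outline becomes a complete proof.
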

One can refer to \cite[Proposition 2.1]{wang2021optimal} for details of the proof. When come to the space derivatives with $k=0$, some similar results also can be found in \cite{carrillo2008asymptotic,dong2008spatial,gao2020global}.

Next, we are going to present some results about local well-posedness and regularity of solution to the Navier-Stokes equations with initial data $u_0\in L^3(\mathbb{R}^3)$ satisfying $\nabla\cdot u_0=0$. The mild solutions will be studied in the following space:
\begin{align}\label{eq:spaces}
X_T:=\left\{f\in C_b\left((0,T];L^{3}(\mathbb{R}^3)\right),~~\sup_{t\in(0,T]}t^{\frac{1}{4}}\|f(t)\|_{L^{6}}<\infty\right\}
\end{align}
with norm 
\[
\|f\|_{X_T}:=\max\left\{\sup_{t\in(0,T]}\|f(t)\|_{L^3},\quad\sup_{t\in(0,T]}t^{\frac{1}{4}}\|f(t)\|_{L^{6}} \right\}.
\]
Then, space $(X_T,\|\cdot\|_{X_T})$ is a Banach space. According to Hodge's decomposition, every vector field $v\in L^3(\mathbb{R}^3)$ has a unique orthogonal decomposition:
\begin{align*}
v=w+\nabla g,\quad \nabla\cdot w=0.
\end{align*}
with $w, \nabla g\in L^3\left(\mathbb{R}^3\right)$. Let $\mathcal{P}$ be the Helmholtz projection in $\mathbb{R}^3$. Then, for the solution up to the Navier-Stokes equation we have $\mathcal{P}u=u$ and $\mathcal{P}\nabla p=0$. Apply $\mathcal{P}$ on the first equation of \eqref{eq:NS}, which project the Navier-Stokes equation on the sapce of divergence-free vector fields. The mild solution are defined as follows:
\begin{definition}[Mild solutions]\label{def:mild}
Let  $u_0\in L^3(\mathbb{R}^3)$ satisfy $\nabla\cdot u_0=0. $We call $u\in X_T$ a mild solution to the Navier-Stokes equations \eqref{eq:NS} with initial datum $u_0$ if $u$ satisfies the following Duhamel integral equation in  $X_T$:
\begin{align}\label{eq:mild}
u(t)=G(\cdot,t)\ast u_0-\int_0^t\nabla G(\cdot,t-s)\ast\left[\mathcal{P}(u\otimes u)(s)\right]\di s,\quad t\in[0,T].
\end{align}
If equation \eqref{eq:mild} holds for any $T>0$, then we call $u$ a global mild solution.
\end{definition}
We have the following theorem:
\begin{theorem}\label{thm:contraction}
Let $u_0\in L^{3}(\mathbb{R}^3)$ and $\nabla\cdot u_0=0$. There exists  a constant $\theta>0$ small enough such that if $T>0$ satisfies
\begin{align}\label{eq:important}
\sup_{t\in(0,T]}t^{\frac{1}{4}}\|G(\cdot,t)\ast u_0\|_{L^{6}}\leq \theta,
\end{align}
then there is a unique mild solution $u\in X_{T}$ to the Navier-Stokes equations  \eqref{eq:NS} in the following set:
\[
X_{T}^\theta:=\left\{u\in X_{T}:~~\sup_{t\in(0,T]}t^{\frac{1}{4}}\|u(t)\|_{L^{6}}\leq 2\theta\right\}.
\]
We also have $u\in C\left([0,T]; L^{3}(\mathbb{R}^3)\right)$ and $u(x,0)=u_0(x)$ for $ x\in\mathbb{R}^3$. 

(Small initial data) If the total mass $\|u_0\|_{L^3}$ is small enough, then from \eqref{eq:estimate3} we obtain
\begin{align}\label{eq:assumptiona0}
\sup_{t\in(0,T]}t^{\frac{3}{2}\left(\frac{1}{3}-\frac{1}{q}\right)}\left\|G(\cdot,t)\ast u_0\right\|_{L^q}\leq C\|u_0\|_{L^3}\leq\theta,\qquad 3\leq q\leq\infty,
\end{align}
for any $T>0$, and hence, there is a unique global mild solution.

(Decay rate estimates for derivatives) Moreover, the mild solution $u(t)$  satisfies
\begin{align}\label{eq:regularity}
\left\|D_x^\beta\partial_t^ku(t)\right\|_{L^q(\mathbb{R}^3)}\leq Ct^{-\frac{|\beta|}{2}-k-\frac{3}{2}(\frac{1}{3}-\frac{1}{q})},\quad  3\leq q\leq\infty,
\end{align}
for any $k\in\mathbb{N}$ and $\beta\in\mathbb{N}^3$, and $C>0$ is a constant depends on $\theta, ~\beta$ and $k$.
\end{theorem}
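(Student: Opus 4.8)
The plan is to handle the four assertions in turn, building on the heat-kernel estimates of Lemma \ref{lmm:heatkernelestimate2}. First, for the local existence and uniqueness in $X_T^\theta$, I would set up a standard fixed-point argument for the map
\[
\Phi(u)(t):=G(\cdot,t)\ast u_0-\int_0^t\nabla G(\cdot,t-s)\ast\bigl[\mathcal{P}(u\otimes u)(s)\bigr]\di s.
\]
The linear part $G(\cdot,t)\ast u_0$ lies in $X_T$ by \eqref{eq:estimate3} (for the $L^3$ bound) and \eqref{eq:key1} (which gives not just boundedness but the requisite control of $\sup_{t}t^{1/4}\|\cdot\|_{L^6}$, and indeed allows us to arrange \eqref{eq:important} by taking $T$ small). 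For the bilinear part $B(u,v)(t):=\int_0^t\nabla G(\cdot,t-s)\ast[\mathcal{P}(u\otimes v)(s)]\di s$, I would estimate the $L^3$ and $L^6$ norms using \eqref{eq:estimate4} together with boundedness of $\mathcal{P}$ on $L^p$ ($1<p<\infty$) and Hölder: writing $\|(u\otimes v)(s)\|_{L^3}\lesssim\|u(s)\|_{L^6}\|v(s)\|_{L^6}$, the convolution kernel $\|\nabla G(\cdot,t-s)\ast\cdot\|_{L^3\to L^3}$ scales like $(t-s)^{-1/2}$ and $\|\cdot\|_{L^3\to L^6}$ like $(t-s)^{-1/2-1/4}$, and the time integrals $\int_0^t(t-s)^{-1/2}s^{-1/2}\di s$, $\int_0^t(t-s)^{-3/4}s^{-1/2}\di s$ are finite Beta integrals. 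This yields $\|B(u,v)\|_{X_T}\le C_0\,(\sup_t t^{1/4}\|u\|_{L^6})(\sup_t t^{1/4}\|v\|_{L^6})$ with $C_0$ absolute. On the ball $X_T^\theta$ one then gets $\|\Phi(u)\|_{X_T^\theta}$-type bounds and a contraction estimate $\|\Phi(u)-\Phi(v)\|_{X_T}\le 4C_0\theta\,\|u-v\|_{X_T}$, so choosing $\theta$ with $4C_0\theta<1$ (and small enough that $2\theta\cdot 2\theta C_0\le\theta$ etc.) gives a unique fixed point in $X_T^\theta$. Continuity into $L^3$ up to $t=0$ with $u(\cdot,0)=u_0$ follows from strong continuity of the heat semigroup on $L^3$ plus the fact that the bilinear term tends to $0$ in $L^3$ as $t\to0^+$ (again a Beta-integral estimate, now going to zero).

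For the small-data global statement, the inequality \eqref{eq:assumptiona0} is immediate from \eqref{eq:estimate3} with $k=|\beta|=0$, $p=3$: $\|G(\cdot,t)\ast u_0\|_{L^q}\le M_0 t^{-\frac{3}{2}(\frac13-\frac1q)}\|u_0\|_{L^3}$, so if $M_0\|u_0\|_{L^3}\le\theta$ then \eqref{eq:important} holds for \emph{every} $T>0$; applying the local result on each $[0,T]$ and invoking uniqueness to patch the solutions gives a global mild solution.

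For the decay-rate estimates \eqref{eq:regularity}, I would first treat spatial derivatives ($k=0$) and then bootstrap to time derivatives using the equation. For $k=0$: differentiate \eqref{eq:mild}, bound $\|D_x^\beta(G(\cdot,t)\ast u_0)\|_{L^q}$ by \eqref{eq:estimate3}, and for the bilinear term distribute derivatives, split the time integral at $t/2$, and run an induction on $|\beta|$ — on $[0,t/2]$ all derivatives hit the kernel (use \eqref{eq:estimate4}) and on $[t/2,t]$ derivatives hit $u\otimes u$, where the inductive hypothesis and Hölder apply; this is exactly the Giga–Sawada-type argument and is why the $t^{-|\beta|/2-\frac32(\frac13-\frac1q)}$ rate comes out, with a constant depending on $\beta$ through the combinatorics. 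To convert to time derivatives one uses that, on the solution, $\partial_t u=\Delta u-\mathcal{P}(u\cdot\nabla u)$, so $\partial_t^k u$ is a (finite) sum of terms with $2k$ spatial derivatives acting on products of lower space-time derivatives of $u$; feeding in the spatial decay estimates already obtained and iterating on $k$ produces the $t^{-k}$ factor. Since this part only needs \emph{qualitative} finiteness of the constant (not uniformity in $\beta,k$), the crude bounds suffice here and the sharp, uniform-in-$(\beta,k)$ version is postponed to Section \ref{sec:analytic}.

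The main obstacle is the bilinear time-integral estimate near the two endpoints $s=0$ and $s=t$: the factors $(t-s)^{-1/2-\frac32(\frac1p-\frac1q)}$ from the gradient heat kernel and $s^{-3/4}$ from the $t^{1/4}$-weighted $L^6$ norm of $u$ must combine to an \emph{integrable} singularity with the right overall power of $t$, and in the derivative estimates \eqref{eq:regularity} one must additionally arrange the split-at-$t/2$ bookkeeping so that the induction closes with a finite (if $\beta,k$-dependent) constant; keeping the convolution/Hölder exponents consistent so that every Beta integral converges is the delicate point, though all of it reduces to the scaling already encoded in Lemma \ref{lmm:heatkernelestimate2}.
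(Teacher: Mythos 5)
Your proposal is correct and follows essentially the same route as the paper, which does not prove this theorem in-house but defers to Kato's classical fixed-point method and the cited literature for the derivative decay rates; your sketch is a faithful instantiation of exactly those arguments (contraction in the auxiliary norm $\sup_t t^{1/4}\|\cdot\|_{L^6}$ made small via \eqref{eq:key1} or via small $\|u_0\|_{L^3}$, Beta-integral bilinear estimates, Giga--Sawada-type induction on $|\beta|$, then bootstrapping time derivatives through the equation). The one point to tighten is your claim that the bilinear term vanishes in $L^3$ as $t\to0^+$ ``by a Beta-integral estimate'': the integral $\int_0^t(t-s)^{-1/2}s^{-1/2}\di s$ is a $t$-independent constant, so the vanishing must instead come from $\sup_{0<s\le t}s^{1/4}\|u(s)\|_{L^6}\to 0$, which follows from \eqref{eq:key1} together with the fixed-point construction.
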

The above local well-posedness results can be directly obtained by the classical Kato's method \cite{kato1984strongl} (see also \cite{weissler1980local,biler1995cauchy,biler2010blowup,carrillo2008asymptotic} for the local well-posedness of mild solutions to other dissipative equations).  When $k=0$, inequalities similar to \eqref{eq:regularity} were obtained for the Navier-Stokes equations \cite{dong2009optimal,giga2002regularizing,sawada2006on,sawada2005analyticity} and the quasi-geostrophic equation  \cite{dong2008spatial}. When $k\neq0$, similar space-time regularity estimate can also obtained; see \cite[Inequality (48)]{dong2008spatial} for the quasi-geostrophic equation. 

Next, we provide a more precise and useful estimate similar to \eqref{eq:regularity} with $|\beta|=k=0$.
We have
\begin{theorem}[$L^q(\mathbb{R}^3)$ estimate]\label{thm:regularity1}
Let $u_0\in L^{3}(\mathbb{R}^3)$, $\nabla\cdot u_0=0$ and $\theta$, $T$ satisfy \eqref{eq:important}. Then, the mild solution $u(t)$ obtained by Theorem \ref{thm:contraction} belongs to $L^q(\mathbb{R}^3)$ for any $3\leq q\leq +\infty$, and we have
\begin{align}\label{eq:q}
\|u(t)\|_{L^q(\mathbb{R}^3)}\leq A\theta t^{-\frac{3}{2}(\frac{1}{3}-\frac{1}{q})},\quad  3\leq q\leq\infty,~~0<t\leq T,
\end{align}
where  $A$ is a constant independent of $u_0$, $T$ and $\theta$.
\end{theorem}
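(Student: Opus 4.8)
\emph{Proof proposal.} I would work directly from the Duhamel identity \eqref{eq:mild}, writing $u(t)=G(\cdot,t)\ast u_0+B(u,u)(t)$ with the bilinear term $B(u,u)(t):=-\int_0^t\nabla G(\cdot,t-s)\ast[\mathcal P(u\otimes u)(s)]\,\di s$, and estimate the $L^q$ norm of the two pieces separately, in each step aiming at exactly the scale-invariant rate $t^{-\frac32(\frac13-\frac1q)}$ (with a factor $\theta$ for the linear part and $\theta^2$ for $B$). Keeping the time exponents perfectly balanced at every step is what forces the final constant to depend only on $q$; once $q$ stays away from the endpoint $\infty$ one can then take it uniform.

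For the linear term I would exploit the reproducing property of the heat kernel. For $6\le q\le\infty$, write $G(\cdot,t)\ast u_0=G(\cdot,t/2)\ast\bigl(G(\cdot,t/2)\ast u_0\bigr)$ and apply the $L^6\to L^q$ bound from \eqref{eq:estimate3} together with the hypothesis \eqref{eq:important}, which controls $(t/2)^{1/4}\|G(\cdot,t/2)\ast u_0\|_{L^6}$ by $\theta$:
\[
\|G(\cdot,t)\ast u_0\|_{L^q}\le C\,(t/2)^{-\frac32(\frac16-\frac1q)}\|G(\cdot,t/2)\ast u_0\|_{L^6}\le C'\theta\,t^{-\frac32(\frac13-\frac1q)},
\]
the exponents matching because $\frac32(\frac16-\frac1q)+\frac14=\frac32(\frac13-\frac1q)$. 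For $3\le q<6$ one interpolates this $L^6$ control against the $L^3$ information available from $u\in X_T$ (equivalently, uses \eqref{eq:estimate3} with $p=3$); the resulting time power is again exactly $-\frac32(\frac13-\frac1q)$.

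For the bilinear term I would combine \eqref{eq:estimate4}, the boundedness of the Helmholtz projection $\mathcal P$ on $L^r(\mathbb R^3)$ for $1<r<\infty$, and the bound $\|u(s)\|_{L^6}\le 2\theta\,s^{-1/4}$ which holds on $X_T^\theta$. Taking $r=3$, so that $\|\mathcal P(u\otimes u)(s)\|_{L^3}\le\|u(s)\|_{L^6}^2\le 4\theta^2 s^{-1/2}$, gives for $3\le q<\infty$
\[
\|B(u,u)(t)\|_{L^q}\le C\int_0^t(t-s)^{-\frac12-\frac32(\frac13-\frac1q)}\,4\theta^2 s^{-\frac12}\,\di s=c_q\,\theta^2\,t^{-\frac32(\frac13-\frac1q)},
\]
where $c_q<\infty$ precisely because $q<\infty$ (so the exponent $\frac12+\frac32(\frac13-\frac1q)$ of $t-s$ is $<1$), and the substitution $s=t\sigma$ again produces exactly the power $-\frac32(\frac13-\frac1q)$. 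Adding the two estimates and using that $\theta$ is small (built into the hypotheses) gives $\|u(t)\|_{L^q}\le A\theta\,t^{-\frac32(\frac13-\frac1q)}$ for $3\le q<\infty$; the endpoint $q=\infty$ is then recovered by one further application of the bilinear estimate with $r=6$, now feeding in the just-proved bound on $\|u(s)\|_{L^{12}}$, together with the $q=\infty$ case of the linear estimate.

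The main obstacle is the range of $q$ near $3$. There the heat-kernel reproduction trick is not available, since $e^{t\Delta}$ does not map $L^6$ boundedly into $L^q$ for $q<6$, so the linear term must be handled by interpolation against $L^3$; and in the bilinear bound the exponent $\frac12+\frac32(\frac13-\frac1q)$ is close to $1$, so the intermediate exponent $r$ has to be chosen with care (any admissible $r\in(3/2,3]$ works, but the estimate becomes borderline as $q\downarrow 3$), and cleanly pinning down the dependence on $\theta$ there is best done by a short bootstrap/continuity argument on $N_q(t):=\sup_{0<s\le t}s^{\frac32(\frac13-\frac1q)}\|u(s)\|_{L^q}$. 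The secondary, pervasive difficulty is purely the bookkeeping: one must check at each step that the time powers add up to exactly $-\frac32(\frac13-\frac1q)$, since any mismatch would make the constant depend on $T$.
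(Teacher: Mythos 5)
Your proposal is correct in substance and follows the same skeleton as the paper's proof: split $u$ via \eqref{eq:mild} into the linear part $G(\cdot,t)\ast u_0$ and the bilinear part, bound the bilinear part for $3\le q<\infty$ by Young's inequality together with $\|\mathcal P(u\otimes u)(s)\|_{L^3}\le A_1\|u(s)\|_{L^6}^2\le 4A_1\theta^2 s^{-1/2}$ (using that $u\in X_T^\theta$ from Theorem \ref{thm:contraction}) and a convergent beta-type integral, then recover $q=\infty$ by one further application of \eqref{eq:estimate4} fed with an already-proved finite-$q$ bound (the paper uses $L^8$ where you use $L^{12}$; both work identically). The genuine difference is the linear term: the paper simply invokes \eqref{eq:assumptiona0}, i.e.\ $t^{\frac32(\frac13-\frac1q)}\|G(\cdot,t)\ast u_0\|_{L^q}\le C\|u_0\|_{L^3}\le\theta$, whereas you derive the $q\ge6$ cases from the hypothesis \eqref{eq:important} alone via the factorization $G(\cdot,t)\ast u_0=G(\cdot,t/2)\ast\bigl(G(\cdot,t/2)\ast u_0\bigr)$ and \eqref{eq:estimate3} with $p=6$; this is arguably the more careful route, since the bound by $\theta$ in \eqref{eq:assumptiona0} is only justified in the small-data regime. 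Note, however, that for $3\le q<6$ your interpolation against $L^3$ produces a constant of the form $\|u_0\|_{L^3}^{1-\lambda}\theta^{\lambda}$ rather than $A\theta$, so it does not recover the stated form of the constant any better than the paper's own citation of \eqref{eq:assumptiona0} does. Two smaller corrections: no continuity or bootstrap argument on $N_q(t)$ is needed, because the a priori bound $\sup_{0<s\le T}s^{1/4}\|u(s)\|_{L^6}\le2\theta$ is already supplied by Theorem \ref{thm:contraction}; and the bilinear estimate is not borderline as $q\downarrow3$ --- at $q=3$ the exponent of $(t-s)$ is $\frac12$ and the rescaled integral $\int_0^1(1-\sigma)^{-1/2}\sigma^{-1/2}\di\sigma$ is harmless --- the only borderline endpoint is $q=\infty$, which is precisely why both you and the paper treat it by a separate bootstrap.
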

\begin{proof}
Let $u_1(t)$ and $u_2(t)$ be defined by
\begin{align}\label{u1u2}
u_1:=G(\cdot,t)\ast u_0,\quad u_2:=-\int_0^t\nabla G(\cdot,t-s)\ast\left[\mathcal{P}(u\otimes u)(s)\right]\di s.
\end{align}
Due to \eqref{eq:assumptiona0}, we have
\begin{align}\label{eq:Lq1}
\|u_1(t)\|_{L^q}\leq \theta t^{-\frac{3}{2}\left(\frac{1}{3}-\frac{1}{q}\right)},\quad 3\leq q\leq \infty,\quad 0< t\leq T.
\end{align}
Next, we deal with the second term $u_2$ and prove \eqref{eq:q}. From \eqref{eq:estimate4}, we have
\begin{align*}
\|u_2(t)\|_{L^q}\leq\int_0^t\|\nabla G(\cdot,t-s)\ast[\mathcal{P}(u\otimes u)(s)]\|_{L^q}\di s\leq C\int_0^t(t-s)^{-\frac{1}{2}-\frac{3}{2}\left(\frac{1}{3}-\frac{1}{q}\right)}\|\mathcal{P}(u\otimes u)(s)\|_{L^3}\di s.
\end{align*}
Because $\mathcal{P}$ is a bounded operator from $L^3(\mathbb{R}^3)$ to itself, we have
\begin{align*}
\|u_2(t)\|_{L^q}&\leq C\int_0^t(t-s)^{-\frac{1}{2}-\frac{3}{2}\left(\frac{1}{3}-\frac{1}{q}\right)}\|u\otimes u\|_{L^3}\di s\\
&\leq C\int_0^t(t-s)^{-\frac{1}{2}-\frac{3}{2}\left(\frac{1}{3}-\frac{1}{q}\right)}s^{-\frac{1}{2}}\left(s^{\frac{1}{4}}\|u(s)\|_{L^6}\right)^2\di s\leq C\theta^2t^{-\frac{3}{2}\left(\frac{1}{3}-\frac{1}{q}\right)}.
\end{align*}

For the case  $q=\infty$, we obtain from Young's inequality that
\begin{align*}
\|u_2(t)\|_{L^\infty}\leq &\int_0^t\|\nabla G(\cdot,t-s)\|_{L^{\frac{4}{3}}}\|\mathcal{P}(u\otimes u)(s)\|_{L^4}\di s\\
\leq& C\int_0^t(t-s)^{-\frac{7}{8}}\|u\otimes u\|_{L^4}\di s\leq C\int_0^t(t-s)^{-\frac{7}{8}}\|u(s)\|_{L^{8}}^2\di s\\
\leq&C\int_0^t(t-s)^{-\frac{7}{8}} s^{-\frac{5}{8}}\di s\cdot \sup_{0< s\leq t} s^{\frac{5}{8}}\|u(s)\|_{L^8}^2\leq C\theta^2 t^{-\frac{1}{2}}.
\end{align*}
Combining \eqref{eq:Lq1}, we obtain \eqref{eq:q}.
\end{proof}

\section{Joint space-time analyticity}\label{sec:analytic}

In this section, we are going to state and prove the main theorem of this paper, which implies the results in Theorem \ref{thm:main}.
We have

\begin{theorem}\label{thm:analytic}
Assume that $u_0\in L^{3}(\mathbb{R}^3)$, $\nabla\cdot u_0=0$ and $\theta$, $T$ satisfy \eqref{eq:important} for  some small $\theta$. Let $u(t)$ be the mild solution to the Navier-Stokes equations \eqref{eq:NS} in $[0,T]$. 
Then there exists a positive constant  $M$ depending on $\theta$ (or $T$), but independent of $\beta$ and $k$ such that 
\begin{align}\label{eq:analytic}
\left\|D_x^\beta\partial_t^ku(t)\right\|_{L^q(\mathbb{R}^3)}\leq M^{|\beta|+k}\left(|\beta|+k\right)^{|\beta|+k}t^{-\frac{|\beta|}{2}-k-\frac{3}{2}(\frac{1}{3}-\frac{1}{q})}
\end{align}
holds for any $3\leq q\leq\infty$, $ t\in(0,T]$, $ \beta\in\mathbb{N}^3$ and $k\in\mathbb{N}$ with $|\beta|+k>0$. 

Moreover, if $\|u_0\|_{L^3(\mathbb{R}^3)}$ is small enough, there exists a positive constant $M$ independent of $T$, $\beta$ and $k$ such that  inequality \eqref{eq:analytic} holds for any $t\in(0,\infty)$.
\end{theorem}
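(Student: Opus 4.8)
The plan is to follow the roadmap laid out in the introduction: reduce \eqref{eq:analytic} to the auxiliary estimate \eqref{eq:pretk} on $\|D_x^\beta\partial_t^k(t^k u(t))\|_{L^q}$, which absorbs the factor $t^k$ coming from time derivatives onto the left-hand side and replaces the target exponent $|\beta|+k$ by $|\beta|+k-1$ together with an extra gain $\hat M^{-\delta}$, $0<\delta<1$. The point of this reformulation is that the commutator $\partial_t^k(t^k\,\cdot\,)=\sum_{j}\binom{k}{j}\frac{k!}{(k-j)!}t^{k-j}\partial_t^{k-j+j}$ (more precisely the Leibniz expansion) keeps the time-derivative count balanced against powers of $t$, so one never loses the decay rate. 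Once \eqref{eq:pretk} is established for all $3\le q\le\infty$ and all $|\beta|+k\ge 1$, Theorem \ref{thm:analytic} follows by undoing the substitution and using $k^k\le e^k k!$-type bounds to convert $(|\beta|+k)^{|\beta|+k-1}\hat M^{|\beta|+k-\delta}$ into $M^{|\beta|+k}(|\beta|+k)^{|\beta|+k}$; this last step is the content of Theorem \ref{thm:analytic} deduced from \eqref{eq:pretk} and is essentially bookkeeping.

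The core is an induction on $L:=|\beta|+k$. For $L=1$ the estimate is read off directly from the heat-kernel bounds \eqref{eq:estimate3}, \eqref{eq:estimate4} applied to the Duhamel formula \eqref{eq:mild}, together with the $L^q$ bound \eqref{eq:q} from Theorem \ref{thm:regularity1} and the boundedness of $\mathcal P$ on $L^p$, $1<p<\infty$. For the inductive step, assuming \eqref{eq:pretk} for all multi-indices and time-orders with $|\beta|+k\le L-1$, I would differentiate \eqref{eq:mild}, split as in \eqref{eq:esti} into a linear term $D_x^\beta\partial_t^k[t^kG(\cdot,t)\ast u_0]$ and a bilinear Duhamel term. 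The linear term is handled by \eqref{eq:estimate3} and the smallness \eqref{eq:assumptiona0}, producing a constant $h_1(M)\to 0$ as $M\to\infty$. For the bilinear term one expands $t^k=((t-s)+s)^k$ and $\partial_t^k$ via Leibniz, distributes the derivatives between the kernel $\nabla G(\cdot,t-s)$ and the factor $\mathcal P(u\otimes u)(s)=\mathcal P\sum (D^{\beta'}\partial^{k'}u)\otimes(D^{\beta''}\partial^{k''}u)$ by the Leibniz rule again, and applies the inductive hypothesis to each factor (each has order strictly less than $L$ since the other factor carries at least a zeroth-order piece, but one must be careful: the decomposition must genuinely lower the order, which is why the bilinear structure and the split of $t^k$ matter). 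Then Young's inequality in the time convolution, choosing an auxiliary exponent $a$ so that the resulting Beta-function integral $\int_0^t(t-s)^{-1/2-\frac32(1-1/a)}s^{-\frac32(-1+1/a+1/3)}\,ds$ converges, yields
\[
\sup_{0<s\le t}s^{\frac{|\beta|}{2}+\frac32(\frac13-\frac1q)}\bigl\|D_x^\beta\partial_s^k(s^ku(s))\bigr\|_{L^q}\le h(M)\,\hat M^{|\beta|+k-\delta}(|\beta|+k)^{|\beta|+k-1},
\]
with $h(M)\to 0$ as $M\to\infty$ (in the time-analyticity case $|\beta|=0$ one additionally pays a factor $\theta$, which is why smallness of $\theta$, equivalently the local time $T$, enters — see Remark \ref{rmk:theta}). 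Choosing $M=\hat M$ large enough that $h(M)<1$ closes the induction for $3\le q<\infty$, and the case $q=\infty$ is recovered by a bootstrapping argument feeding the finite-$q$ bound back into the Duhamel formula with a kernel in $L^r$, $r<\infty$.

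The main obstacle is the bilinear Duhamel term, and within it two intertwined difficulties: first, the \emph{combinatorics} of distributing $\partial_t^k$ and $D_x^\beta$ over both the kernel and the quadratic nonlinearity while keeping the sum of the binomial/factorial coefficients under control — these must be shown to contribute at most a geometric factor $c_1^{|\beta|+k}$ and a polynomial factor $(|\beta|+k)^{c_2}$, so that the function $f(\hat M)=\sup_{|\beta|+k\ge1}(c_1/\hat M)^{|\beta|+k}(|\beta|+k)^{c_2}$ tends to $0$ as $\hat M\to\infty$; and second, ensuring the exponent on $(|\beta|+k)$ after applying the inductive hypothesis twice and summing is at most $|\beta|+k-1$, not $|\beta|+k$, which is exactly where the gain $\hat M^{-\delta}$ and the $-1$ in the exponent are spent and must be tracked with care. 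The key structural fact making this work is that in $\mathcal P(u\otimes u)$ each derivative hits only one factor, so in every term of the expansion one factor has order $\le L-1$ and we may apply the hypothesis; combined with $\binom{L}{j}(|\beta|+k)^{\cdots}$-type estimates and the convergence of the Beta integral, this yields the required $h(M)\to 0$. I expect the $|\beta|=0$ (pure time-derivative) sub-case to be the most delicate, since there the heat kernel contributes no spatial smoothing and one is forced to rely on the $\theta$-smallness rather than on an absolute constant going to zero with $M$.
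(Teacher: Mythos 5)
Your roadmap coincides with the paper's: reduce to the shifted estimate on $\left\|D_x^\beta\partial_t^k\left(t^ku(t)\right)\right\|_{L^q}$, induct on $L=|\beta|+k$ with base case from \eqref{eq:regularity}, split the Duhamel term, use the heat-kernel bounds and Young's inequality with a convergent Beta-type integral, treat $q=\infty$ by bootstrapping from finite $q$, and convert back at the end. However, there is a genuine gap at the heart of the inductive step. When $D_x^\beta\partial_t^k\left[t^k\mathcal{P}(u\otimes u)\right]$ is expanded by Leibniz, the terms in which \emph{all} $L$ derivatives fall on a single factor of $u\otimes u$ --- namely $\left[D_x^\beta\partial_t^k(t^ku)\right]\otimes u$ and $u\otimes\left[D_x^\beta\partial_t^k(t^ku)\right]$ --- have order exactly $L$, so the inductive hypothesis (which covers orders $1,\dots,L-1$) does not apply to them. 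Your parenthetical claim that ``each has order strictly less than $L$ since the other factor carries at least a zeroth-order piece'' is false: a zeroth-order complementary factor leaves the full order $L$ on the other one, and your hedge about ``the decomposition genuinely lowering the order'' does not resolve this, because for these terms it does not. These top-order terms are the crux of the whole proof, and the proposal supplies no mechanism for them.

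The paper's resolution is an absorption argument: set $\phi(t):=\sup_{0<s\le t}s^{\mu_q}\left\|D_x^\beta\partial_s^k\left(s^ku(s)\right)\right\|_{L^q}$ (finite a priori by \eqref{eq:regularity}) and show the dangerous terms contribute at most $c\,\phi(t)t^{-\mu_q}$ with $c<1$, after which $\phi\le(\text{controlled})+c\,\phi$ closes the induction. For $|\beta|>0$ this is achieved by confining the top-order contribution to the near-diagonal interval $((1-\epsilon)t,t]$ with $\epsilon=(K|\beta|)^{-1}$ and $K$ large, so that the integral $J_1(\epsilon)$ in \eqref{eq:J1e} drops below $1/(4AA_0A_1\theta)$ and one gets $c=\tfrac12$; for $|\beta|=0$ no such localization is available and one must instead take $c=2AA_0A_1B\theta<1$, which is precisely where the smallness of $\theta$ is spent --- not merely as ``paying a factor $\theta$'' in the constant, as you describe. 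A secondary, more minor issue: the final passage back to \eqref{eq:analytic} is not an ``undoing of the substitution,'' since $\partial_t^k(t^ku)\neq t^k\partial_t^ku$; the paper iterates the commutator identity \eqref{eq:kinduc} for $j=0,\dots,k-1$, accumulating a harmless factor $(1+1/M)^k$. That step is indeed bookkeeping, but it runs through this recursion rather than through bounds of the form $k^k\le e^kk!$.
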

\begin{remark}\label{rmk:theta}
In the proof of Theorem \ref{thm:analytic}, we need  $\theta$ to be small. Hence, the results hold for two cases:
\begin{enumerate}
	\item [(i)] For an arbitrary initial datum $u_0\in L^3(\mathbb{R}^3)$ with $\nabla\cdot u_0=0$, according to \eqref{eq:key1}, we only need to choose  time $T$ small enough to satisfy \eqref{eq:important}. In this case, we can only obtain \eqref{eq:analytic} with some constant $M$ depending on $T$.
	\item [(ii)] When $\|u_0\|_{L^3(\mathbb{R}^3)}$ is small enough, the constant $\theta$ can also be chosen small (see \eqref{eq:assumptiona0}). In this case, we do not need any assumption on time and the constant $M$ in \eqref{eq:analytic} is independent of time.
\end{enumerate}
\end{remark}

Instead of proving \eqref{eq:analytic} directly, we will apply a technique from a recent paper \cite{donghongjie2020jfa}, and
change the position of the decay rate $t^k$ (corresponding to time derivatives) from the right
hand side of  \eqref{eq:analytic} to the left to
show that there exist some constants $M$ (independent of $\beta$ and $k$) and $0<\delta<1$ such that
\begin{align}\label{eq:Linduction}
\left\|D_x^\beta\partial_t^k\left(t^ku(t)\right)\right\|_{L^{q}}\leq M^{|\beta|+k-\delta}(|\beta|+k)^{|\beta|+k-1}t^{-\frac{|\beta|}{2}-\frac{3}{2}\left(\frac{1}{3}-\frac{1}{q}\right)}
\end{align}
for any $3\leq q\leq\infty$, $t\in(0,T]$. We will separate the proof of \eqref{eq:Linduction} into two cases: $3\leq q<\infty$ and $q=\infty$. 

For the case $3\leq q<\infty$ of inequality \eqref{eq:Linduction}, we are going to prove by induction. First, for $|\beta|+k=1$, we can directly deduce \eqref{eq:Linduction} from \eqref{eq:regularity} for some constant $M>0$. Then, assume that there exists a constant $M$ (to be fixed) independent of $\beta$ and $k$ such that \eqref{eq:Linduction} holds for $3\leq q<\infty$ and $0<|\beta|+k\leq L-1$ for some $L\geq 2$. With the above assumption, we are going to prove that \eqref{eq:Linduction} also holds for $|\beta|+k=L$ with the same $M$. The rest of the induction will be divided into two propositions:
\begin{enumerate}
\item [$\bullet$]
In Proposition \ref{pro:one}, we complete the part of $|\beta|>0$, $|\beta|+k=L$, and $3\leq q<\infty$ in the induction.
\item [$\bullet$] 
In Proposition \ref{pro:two}, we complete the part of $|\beta|=0$, $k=L$, and $3\leq q<\infty$ in the induction, and we finish the proof of \eqref{eq:Linduction} for $3\leq q<\infty$ and $|\beta|+k>0$.
\end{enumerate}
To finish the proof of \eqref{eq:Linduction} with $q=\infty$, we only need to use the result for $3\leq q<\infty$ and some bootstrapping arguments, which will be established in Proposition \ref{pro:three} below.

For convenience, we denote
\begin{align}\label{eq:mu}
\mu_q:=\frac{|\beta|}{2}+\frac{3}{2}\left(\frac{1}{3}-\frac{1}{q}\right)
\end{align}
for $\beta\in\mathbb{N}^3$, $3\leq q\leq\infty$ and $\frac{1}{q}=0$ for $q=\infty$. Let $A_0$ be a constant independent of $p$ such that
\begin{align}\label{eq:A0}
\left\|\nabla G(\cdot,t-s)\right\|_{L^p}\leq A_0(t-s)^{-\frac{1}{2}-\frac{3}{2}\left(1-\frac{1}{p}\right)},\qquad 1\leq p\leq\infty.
\end{align}
We will use $A_1$ to denote the constant generated by the Helmholtz projection $\mathcal{P}$.

\begin{proposition}[$3\leq q<\infty$ and $|\beta|>0$]\label{pro:one}
Let $u_0$, $\theta$ and $T$ satisfy the conditions in Theorem \ref{thm:analytic}. There exists $M>0$ independent of $\beta$, $k$ and $T$ such that if \eqref{eq:Linduction}, i.e.,
\begin{align*}
\left\|D_x^\beta\partial_t^k\left(t^ku(t)\right)\right\|_{L^{q}}\leq M^{|\beta|+k-\delta}(|\beta|+k)^{|\beta|+k-1}t^{-\frac{|\beta|}{2}-\frac{3}{2}\left(\frac{1}{3}-\frac{1}{q}\right)},\quad 3\leq q\leq\infty
\end{align*}
holds for $0<|\beta|+k\leq L-1$ for some $L\geq 2$,
then the same inequality also holds for $|\beta|>0$, $|\beta|+k=L$, $3\leq q<\infty$ and $t\in(0,T]$.
\end{proposition}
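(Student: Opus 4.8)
\textbf{Proof proposal for Proposition \ref{pro:one}.}

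The plan is to start from the Duhamel formula \eqref{eq:mild}, apply $D_x^\beta\partial_t^k$ to $t^ku(t)$, and split as in \eqref{eq:esti} into the linear part coming from $G(\cdot,t)\ast u_0$ and the nonlinear (Duhamel) part. For the linear term, I would first use the Leibniz rule in $t$ to move some of the $k$ time derivatives onto the factor $t^k$, producing a finite sum of terms of the form $c_{m}\,t^{m}\,D_x^\beta\partial_t^{k-m}[G(\cdot,t)\ast u_0]$ with binomial coefficients $\binom{k}{m}$ and $m$ running from $k-k=0$ up to $k$; then apply \eqref{eq:estimate3} of Lemma \ref{lmm:heatkernelestimate2} (with $p=3$) to each summand. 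Collecting the powers of $t$, the binomial coefficients, and the factor $(|\beta|+k)^{\frac{|\beta|}{2}+\cdots}$ coming from the heat-kernel estimate, and using $\|u_0\|_{L^3}$-type bound \eqref{eq:assumptiona0} (so that $\sup_t t^{3/2(1/3-1/q)}\|G\ast u_0\|_{L^q}\le\theta$), I expect to arrive at a bound of the shape $h_1(M)\,M^{|\beta|+k-\delta}(|\beta|+k)^{|\beta|+k-1}t^{-\mu_q}$ where $h_1(M)\to 0$ as $M\to\infty$. The key mechanism here is exactly the one flagged in the introduction: the heat-kernel estimate only costs $(|\beta|+k)^{\frac{|\beta|}{2}+k+\cdots}$, which is a "square-root" gain in $|\beta|$ relative to the target exponent $|\beta|+k-1$, and the leftover powers of $(|\beta|+k)$ get absorbed into a factor $(c_1/M)^{|\beta|+k}(|\beta|+k)^{c_2}\to 0$.

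For the nonlinear term, write $\mathcal{N}(t)=\int_0^t \nabla G(\cdot,t-s)\ast[\mathcal{P}(u\otimes u)(s)]\,ds$ and consider $D_x^\beta\partial_t^k[t^k\mathcal{N}(t)]$. The plan is: (1) again use Leibniz in $t$ on the product $t^k\cdot\mathcal{N}(t)$ to replace $t^k\partial_t^k$ by a sum over $\partial_t^{k-m}$ against $t^m$; (2) for each $\partial_t^{j}$ applied to the time integral, differentiate under the integral sign, being careful about the singular kernel at $s=t$ — here I would either integrate by parts in $s$ or, following the pattern of \cite{donghongjie2020jfa}, use the representation that moves a time derivative of the integral onto a spatial Laplacian of the kernel plus a boundary contribution, so that no genuinely non-integrable singularity survives; (3) distribute the remaining spatial derivative $D_x^\beta$: put some derivatives on $\nabla G(\cdot,t-s)$ (estimated by \eqref{eq:estimate4} or \eqref{eq:A0}) and the rest, via the Leibniz rule for $D_x$, onto the two factors of $u\otimes u$, i.e.\ onto $D_x^{\beta_1}\partial_s^{k_1}(s^{k_1}u)$ and $D_x^{\beta_2}\partial_s^{k_2}(s^{k_2}u)$ with $\beta_1+\beta_2\le\beta$, $k_1+k_2\le k$; (4) apply the inductive hypothesis \eqref{eq:Linduction} to each of these two factors, since each has order $|\beta_i|+k_i\le L-1$ strictly (this is where $|\beta|>0$, or rather $|\beta|+k=L$ with the split being nontrivial, guarantees we stay below $L$ — the case where one factor carries all the derivatives is handled because then the other factor is just $\|u\|_{L^q}$-type, bounded by Theorem \ref{thm:regularity1}); (5) use that $\mathcal{P}:L^p\to L^p$ is bounded for $1<p<\infty$ (constant $A_1$), pick a good Hölder exponent $a$ so that $u\otimes u\in L^a$ via $L^{2a}\times L^{2a}$, and apply Young's inequality to the convolution with $\nabla G$.

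The arithmetic to watch is the bookkeeping of the combinatorial factors. After inserting the inductive bound, each summand carries a product $(|\beta_1|+k_1)^{|\beta_1|+k_1-1}(|\beta_2|+k_2)^{|\beta_2|+k_2-1}$ times a binomial factor from the Leibniz rules times the $(|\beta|+k)^{O(1)}$ from the kernel estimate and the Beta-function value from the $s$-integral $\int_0^t (t-s)^{-1/2-\cdots}s^{-\cdots}\,ds$. The standard inequality $\binom{a}{b}\,b^b(a-b)^{a-b}\le a^a$ (up to polynomial factors in $a$), together with $M^{(|\beta_1|+k_1-\delta)+(|\beta_2|+k_2-\delta)}=M^{|\beta|+k-2\delta}=M^{|\beta|+k-\delta}\cdot M^{-\delta}$, should collapse everything into the target $M^{|\beta|+k-\delta}(|\beta|+k)^{|\beta|+k-1}t^{-\mu_q}$ multiplied by a prefactor of the form $A_0 A_1 C(a)\,M^{-\delta}\cdot(c_1/M)^{\text{something}}(|\beta|+k)^{c_2}$, i.e.\ a quantity tending to $0$ as $M\to\infty$; choosing $M$ large (independently of $\beta,k,L$) then closes the induction. \textbf{The main obstacle} I anticipate is precisely keeping the constant $M$ \emph{invariant} as $L$ grows: one must verify that the prefactor multiplying $M^{|\beta|+k-\delta}(|\beta|+k)^{|\beta|+k-1}t^{-\mu_q}$ is genuinely bounded by a function $h(M)$ with $h(M)\to0$ and no hidden dependence on $L$ — this forces careful tracking of where each power of $(|\beta|+k)$ comes from and exploiting the "$\frac{c_2}{|\beta|+k}$ in the exponent" trick described in the introduction to see $f(M)\to0$. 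A secondary technical point is the endpoint/boundary term from differentiating the singular time integral; since $q<\infty$ is assumed here, the $L^q$ estimate \eqref{eq:estimate4} with a genuine gain in the time exponent keeps all $s$-integrals convergent, but the case $|\beta|=0$ (pure time derivatives) is deliberately deferred to Proposition \ref{pro:two} because there the smallness of $\theta$ is additionally needed.
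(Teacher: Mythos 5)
Your overall architecture (Duhamel split, the heat-kernel estimates of Lemma \ref{lmm:heatkernelestimate2}, induction on $|\beta|+k$, Leibniz on $u\otimes u$, boundedness of $\mathcal{P}$, Young's inequality, and the $h(M)\to 0$ device to keep $M$ uniform in $L$) matches the paper, but three essential mechanisms are missing or incorrect. First, the time derivatives of the singular integral are not handled by ``differentiating under the integral sign plus a boundary contribution'': the paper inserts the identity $t^k=\sum_{j=0}^k\binom{k}{j}s^{k-j}(t-s)^{j}$ \emph{before} differentiating and then changes variables, so that each $\partial_t^j$ reaching the kernel is already paired with the weight $(t-s)^j$ and each $\partial_s^{k-j}$ reaching the nonlinearity is paired with $s^{k-j}$; then \eqref{eq:estimate2} with $m=j$ costs only $(t-s)^{-\frac12-\frac32(1-\frac1a)}$ and no boundary terms arise (see \eqref{shijianjisuan}). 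Second, $D_x^\beta$ is not distributed ``partially'' across the convolution: the time integral is split at $s=(1-\epsilon)t$ with $\epsilon=(K|\beta|)^{-1}$, all of $D_x^\beta$ is put on the nonlinearity for $s\in[(1-\epsilon)t,t]$ (term $I_1$) and all of it on the kernel for $s\in[0,(1-\epsilon)t]$ (term $I_2$); this placement is forced by integrability of the singularities at $s=t$ and $s=0$ respectively, and the smallness of $J_1(\epsilon)$ that the choice of $K$ produces is the substitute, in this proposition, for the smallness of $\theta$ — which is exactly why $|\beta|>0$ is required here.

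Third, and most seriously, your treatment of the term in which all $L=|\beta|+k$ derivatives land on one factor of $u\otimes u$ does not close the induction: that term contains $\left\|D_x^\beta\partial_s^k\left(s^ku(s)\right)\right\|_{L^q}$, which is precisely the quantity being estimated and is \emph{not} covered by the induction hypothesis (which only reaches order $L-1$), so bounding the other factor by Theorem \ref{thm:regularity1} is not enough. The paper keeps this term, bounds its coefficient by $2AA_0A_1\theta J_1(\epsilon)\le \tfrac12$ via the choice \eqref{eq:J1}, and absorbs it into the left-hand side through $\phi(t):=\sup_{0<s\le t}s^{\mu_q}\left\|D_x^\beta\partial_s^k\left(s^ku(s)\right)\right\|_{L^q}$, arriving at $\phi(t)\le h(M)M^{|\beta|+k-\delta}(|\beta|+k)^{|\beta|+k-1}+\tfrac12\phi(t)$; a Gr\"onwall iteration is unavailable because of the singular weight $s^{-\frac32(-1+\frac1a+\frac13)}$ (cf.\ Remark \ref{Gronwall}). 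Without this absorption step the argument you outline does not prove the proposition.
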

\begin{proof}
Direct calculation shows that
\begin{align}\label{eq:induction}
\left\|D_x^\beta\partial_t^k\left(t^ku(t)\right)\right\|_{L^q}\leq \left\|D_x^\beta\partial_t^k\left[t^kG(\cdot,t)\ast u_0\right]\right\|_{L^q}
+\left\|D_x^\beta\partial_t^k\left[t^k\int_0^t\nabla G(\cdot,t-s)\ast\left[\mathcal{P}(u\otimes u)(s)\right]\di s\right]\right\|_{L^q}.
\end{align}
From inequality \eqref{eq:estimate3} with $f=u_0$ and $m=k$, the first term  in \eqref{eq:induction} becomes
\begin{equation*}
\begin{aligned}
\left\|D_x^\beta\partial_t^k\left[t^kG(\cdot,t)\ast u_0\right]\right\|_{L^q}\leq M_0^{|\beta|+k}(|\beta|+k)^{|\beta|+k+\frac{3}{2}(\frac{1}{3}-\frac{1}{q})}\|u_0\|_{L^3}t^{-\mu_q}.
\end{aligned}
\end{equation*}
When $M>M_0$, we have $M^\delta\left(\frac{M_0}{M}\right)^{|\beta|+k}(|\beta|+k)^{1+\frac{3}{2}\left(\frac{1}{3}-\frac{1}{q}\right)}\to 0~\textrm{ as }~|\beta|+k\to \infty.$
Define
\begin{align}\label{eq:h1M}
h_1(M):=\|u_0\|_{L^3}\sup_{|\beta|+k\geq 1}\left[M^\delta\left(\frac{M_0}{M}\right)^{|\beta|+k}(|\beta|+k)^{1+\frac{3}{2}}\right],
\end{align}
and then $\lim_{M\to\infty}h_1(M)=0$. Moreover, we have
\begin{equation}\label{firstterm}
\begin{aligned}
\left\|D_x^\beta\partial_t^k\left[t^kG(\cdot,t)\ast u_0\right]\right\|_{L^q}\leq  h_1(M)M^{|\beta|+k-\delta}(|\beta|+k)^{|\beta|+k-1}t^{-\mu_q}
\end{aligned}
\end{equation}
for any $|\beta|+k>0$ and $3\leq q\leq \infty$.

Next, we estimate the second term in \eqref{eq:induction}.
By the identity $t^k=\sum_{j=0}^k\binom{k}{j}s^{k-j}(t-s)^{j}$, the second term in \eqref{eq:induction} becomes
\begin{equation*}
\begin{aligned}
&\left\|D_x^\beta\partial_t^k\left[t^k\int_0^t\nabla G(\cdot,t-s)\ast\left[\mathcal{P}(u\otimes u)(s)\right]\di s\right]\right\|_{L^q}\\
=&\left\|D_x^\beta\sum_{j=0}^k\binom{k}{j}\partial_t^k\int_0^t\int_{\mathbb{R}^3}(t-s)^j\nabla G(x-y,t-s)s^{k-j}\left[\mathcal{P}(u\otimes u)\right](y,s)\di y\di s\right\|_{L^q}\\
=&\left\|D_x^\beta\sum_{j=0}^k\binom{k}{j}\partial_t^{k-j}\int_0^t\int_{\mathbb{R}^3}\partial_t^j\left[(t-s)^j\nabla G(x-y,t-s)\right]s^{k-j}\left[\mathcal{P}(u\otimes u)\right](y,s)\di y\di s\right\|_{L^q}.
\end{aligned}
\end{equation*}
Changing of variable gives
\begin{equation}\label{shijianjisuan}
\begin{aligned}
&\left\|D_x^\beta\partial_t^k\left[t^k\int_0^t\nabla G(\cdot,t-s)\ast\left[\mathcal{P}(u\otimes u)(s)\right]\di s\right]\right\|_{L^q}\\
=&\left\|D_x^\beta\sum_{j=0}^k\binom{k}{j}\partial_t^{k-j}\int_0^t\int_{\mathbb{R}^3}\partial_s^j\left[s^j\nabla G(x-y,s)\right](t-s)^{k-j}\left[\mathcal{P}(u\otimes u)\right](y,t-s)\di y\di s\right\|_{L^q}\\
=&\left\|D_x^\beta\sum_{j=0}^k\binom{k}{j}\int_0^t\left[\partial_t^j((t-s)^j\nabla G(\cdot,t-s))\right]\ast\left[\partial_s^{k-j}(s^{k-j}\mathcal{P}(u\otimes u)(s))\right]\di s\right\|_{L^q}.
\end{aligned}
\end{equation}
For some $0<\e<1$ to be determined later, we separate the integration into two parts and obtain
\begin{equation}\label{eq:case1}
\begin{aligned}
&\left\|D_x^\beta\partial_t^k\left[t^k\int_0^t\nabla G(\cdot,t-s)\ast\left[\mathcal{P}(u\otimes u)(s)\right]\di s\right]\right\|_{L^q}\\ \leq&\sum_{j=0}^k\binom{k}{j}\int_{(1-\epsilon)t}^t\left\|\left[\partial_t^j((t-s)^j\nabla G(\cdot,t-s))\right]\ast\left[D_x^\beta\partial_s^{k-j}(s^{k-j}\mathcal{P}(u\otimes u)(s))\right]\right\|_{L^q}\di s\\
&+\sum_{j=0}^k\binom{k}{j}\int_0^{(1-\epsilon)t}\left\|\left[D_x^\beta\partial_t^j((t-s)^j\nabla G(\cdot,t-s))\right]\ast\left[\partial_s^{k-j}(s^{k-j}\mathcal{P}(u\otimes u)(s))\right]\right\|_{L^q}\di s\\
=&:I_1+I_2.
\end{aligned}
\end{equation}

\textbf{Estimate of the first term  $I_1$ in \eqref{eq:case1}:} 
From Young's inequality for $1+\frac{1}{q}=\frac{1}{a}+\frac{1}{p}$ and $1\leq a,~p\leq q$, we have
\begin{equation}\label{eq:estI1}
\begin{aligned}
I_1\leq&\sum_{j=1}^k\binom{k}{j}\int_{(1-\epsilon)t}^t\left\|\partial_t^j\left[(t-s)^j\nabla G(\cdot,t-s)\right]\right\|_{L^{a}}\left\|D_x^\beta\partial_s^{k-j}\left[s^{k-j}\mathcal{P}(u\otimes u)(s)\right]\right\|_{L^{p}}\di s\\
&+\int_{(1-\epsilon)t}^t\left\|\nabla G(\cdot,t-s)\right\|_{L^{a}}\left\|D_x^\beta\partial_s^k\left[s^k\mathcal{P}(u\otimes u)(s)\right]\right\|_{L^{p}}\di s=:I_{11}+I_{12}.
\end{aligned}
\end{equation}
By \eqref{eq:estimate2},  there holds
\begin{align}\label{eq:I111}
\left\|\partial_t^j\left[(t-s)^j\nabla G(\cdot,t-s)\right]\right\|_{L^{a}}\leq M_0^{j}j^{j+\frac{1}{2}+\frac{3}{2}(1-\frac{1}{a})}(t-s)^{-\frac{1}{2}-\frac{3}{2}(1-\frac{1}{a})},\quad 1\leq j\leq k.
\end{align}
Using the following inequality
\begin{align}\label{eq:A2}
\left\|D_x^\beta\partial_t^k\left[t^k\mathcal{P}(u\otimes u)(t)\right]\right\|_{L^p}\leq N(1+M^\delta)M^{|\beta|+k-2\delta}\left(|\beta|+k\right)^{|\beta|+k-1}t^{-\frac{|\beta|+1}{2}-\frac{3}{2}\left(\frac{1}{3}-\frac{1}{p}\right)}
\end{align}
for any $3\leq p<\infty$ and $0<|\beta|+k\leq L-1$ (see \eqref{fineestimate1} in Appendix A), we have 
\begin{equation}\label{eq:I112}
\begin{aligned}
&\left\|D_x^\beta\partial_s^{k-j}\left[s^{k-j}\mathcal{P}(u\otimes u)(s)\right]\right\|_{L^{p}}
\leq N(1+M^\delta) M^{|\beta|+k-j-2\delta}(|\beta|+k-j)^{|\beta|+k-j-1}s^{-\frac{|\beta|+1}{2}-\frac{3}{2}(\frac{1}{3}-\frac{1}{p})}
\end{aligned}
\end{equation}
for $1\leq j\leq k$.
Combining \eqref{eq:I111} and \eqref{eq:I112} gives the following estimate for  the term $I_{11}$ in \eqref{eq:estI1}:
\begin{equation}\label{eq:estimateI11}
\begin{aligned}
I_{11}\leq& J_1(\e)\sum_{j=1}^k\binom{k}{j}M_0^{j}j^{j+\frac{1}{2}+\frac{3}{2}(1-\frac{1}{a})}N(1+M^\delta) M^{|\beta|+k-j-2\delta}(|\beta|+k-j)^{|\beta|+k-j-1}t^{-\mu_q}\\
=& N(M^{-\delta}+1)M^{|\beta|+k-\delta}J_1(\e)\sum_{j=1}^k\binom{k}{j}\left(\frac{M_0}{M}\right)^jj^{\frac{3}{2}+\frac{3}{2}(1-\frac{1}{a})}j^{j-1}(|\beta|+k-j)^{|\beta|+k-j-1}t^{-\mu_q},
\end{aligned}
\end{equation}
where $J_1(\e)$ is defined by
\begin{align}\label{eq:J1e}
J_1(\epsilon)=\int_{1-\epsilon}^1(1-s)^{-\frac{1}{2}-\frac{3}{2}\left(1-\frac{1}{a}\right)}s^{-\frac{|\beta|+1}{2}-\frac{3}{2}\left(\frac{1}{3}-\frac{1}{p}\right)}\di s.
\end{align}
Here we choose $1\leq a<\frac{3}{2}$, and then
$-\frac{1}{2}-\frac{3}{2}\left(1-\frac{1}{a}\right)>-1.$
For some constant $K$ to be determined, we set
\begin{equation}\label{eq:ebeta}
\e=\e(|\beta|):=(K|\beta|)^{-1}.
\end{equation}
Then, we have
\begin{equation}
J_1(\e)\leq C(K|\beta|)^{-(\frac{1}{2}-\frac{3}{2}(1-\frac{1}{a}))}\left(1+\frac{1}{K|\beta|-1}\right)^{\frac{|\beta|+1}{2}+\frac{3}{2}\left(\frac{1}{3}-\frac{1}{p}\right)}.
\end{equation}
By the choice of $a$, we have $\frac{1}{2}-\frac{3}{2}(1-\frac{1}{a})>0$ and hence $J_1(\e)\to 0$ as $K|\beta|\to\infty$. Therefore, we fixed $K$ big enough such that
\begin{align}\label{eq:J1}
J_1(\e)\leq J:=\frac{1}{4AA_0A_1\theta}
\end{align}
holds for any $|\beta|>0$.
Here, we choose the above two constants $J$ and $K$ because they will also be used in the following estimate of \eqref{eq:estI12}.
For any $M>M_0$, notice that $\left(\frac{M_0}{M}\right)^jj^{\frac{3}{2}+\frac{3}{2}(1-\frac{1}{a})}\to 0$ as $j\to\infty$. Set
\begin{align}\label{eq:h2M}
h_2(M):= 2\lambda N J \sup_{j\geq 1}\left[\left(\frac{M_0}{M}\right)^jj^{3}\right]
\end{align}
and then $h_2$ satisfies $\lim_{M\to\infty}h_2(M)=0$, where $\lambda$ is the constant in Lemma \ref{lmm:multisequences}.
Therefore, from \eqref{eq:estimateI11} we obtain
\begin{equation}\label{eq:estI11}
\begin{aligned}
I_{11}\leq h_2(M)M^{|\beta|+k-\delta}(|\beta|+k)^{|\beta|+k-1}t^{-\mu_q},
\end{aligned}
\end{equation}
where  we used Lemma \ref{lmm:multisequences}  in the last step.

For the term $I_{12}$ in \eqref{eq:estI1},  we obtain from  the following inequality (See \eqref{fineestimate2} in Appendix A)
\begin{multline}\label{eq:A3}
\left\|D_x^\beta\partial_t^k\left[t^k\mathcal{P}(u\otimes u)(t)\right]\right\|_{L^p}\leq NM^{|\beta|+k-2\delta}\left(|\beta|+k\right)^{|\beta|+k-1}t^{-\frac{|\beta|+1}{2}-\frac{3}{2}\left(\frac{1}{3}-\frac{1}{p}\right)}\\
+A_1\left\|\left[D_x^\beta\partial_t^k\left(t^ku(t)\right)\right]\otimes u(t)\right\|_{L^p}+A_1\left\|u(t)\otimes \left[D_x^\beta\partial_t^k\left(t^ku(t)\right)\right]\right\|_{L^p},
\end{multline}
and H\"older's inequality that
\begin{align*}
&\left\|D_x^\beta\partial_s^k\left[s^k\mathcal{P}(u\otimes u)(s)\right]\right\|_{L^{p}}\leq NM^{|\beta|+k-2\delta}\left(|\beta|+k\right)^{|\beta|+k-1}s^{-\frac{|\beta|+1}{2}-\frac{3}{2}\left(\frac{1}{3}-\frac{1}{p}\right)}\\
&\qquad \qquad +A_1\left\|\left[D_x^\beta\partial_s^k\left(s^ku(s)\right)\right]\otimes u(s)\right\|_{L^{p}}+A_1\left\|u(s)\otimes \left[D_x^\beta\partial_s^k\left(s^ku(s)\right)\right]\right\|_{L^{p}}\\
\leq&NM^{|\beta|+k-2\delta}\left(|\beta|+k\right)^{|\beta|+k-1}s^{-\frac{|\beta|+1}{2}-\frac{3}{2}\left(\frac{1}{3}-\frac{1}{p}\right)}+A_1\left\|D_x^\beta\partial_s^k(s^ku(s))\right\|_{L^q}\|u( s)\|_{L^{\frac{pq}{q-p}}}.
\end{align*}
By \eqref{eq:A0},  we have the following estimate for the term $I_{12}$ in \eqref{eq:estI1}:
\begin{equation}\label{eq:estm}
\begin{aligned}
I_{12}\leq& A_0NM^{|\beta|+k-2\delta}\left(|\beta|+k\right)^{|\beta|+k-1}J_1(\epsilon)t^{-\frac{|\beta|}{2}-\frac{3}{2}\left(\frac{1}{3}-\frac{1}{q}\right)}\\
&+2A_0A_1\int_{(1-\epsilon)t}^t\left\|D_x^\beta \partial_s^k(s^ku(s)) \right\|_{L^q}\|u( s)\|_{L^{\frac{pq}{q-p}}}(t-s)^{-\frac{1}{2}-\frac{3}{2}\left(1-\frac{1}{a}\right)}\di s\\
\leq&A_0NJ_1(\epsilon)M^{|\beta|+k-2\delta}\left(|\beta|+k\right)^{|\beta|+k-1}t^{-\mu_q}\\
&+2AA_0A_1\theta\int_{(1-\epsilon)t}^t\left\|D_x^\beta\partial_s^k(s^ku(s))\right\|_{L^q}(t-s)^{-\frac{1}{2}-\frac{3}{2}\left(1-\frac{1}{a}\right)}s^{-\frac{3}{2}\left(-1+\frac{1}{a}+\frac{1}{3}\right)}\di s,
\end{aligned}
\end{equation}
Set
\begin{align}\label{eq:phi}
\phi(t):=\sup_{0<s\leq t}{s^{\mu_q}\left\|D_x^\beta\partial_s^k\left(s^ku(s)\right)\right\|_{L^q}}.
\end{align}
Set
\begin{align}\label{eq:h3M}
h_3(M):=A_0NJM^{-\delta},
\end{align}
and by the choice of $J_1(\epsilon)$ in \eqref{eq:J1}, we obtain
\begin{equation}\label{eq:estI12}
\begin{aligned}
I_{12}\leq&h_3(M)M^{|\beta|+k-\delta}\left(|\beta|+k\right)^{|\beta|+k-1}t^{-\mu_q}+2AA_0A_1\theta J_1(\e)\cdot\phi(t)t^{-\mu_q}\\
\leq&h_3(M)M^{|\beta|+k-\delta}\left(|\beta|+k\right)^{|\beta|+k-1}t^{-\mu_q}+\frac{1}{2}\cdot\phi(t)t^{-\mu_q}.
\end{aligned}
\end{equation}
Combining \eqref{eq:estI1}, \eqref{eq:estI11} and \eqref{eq:estI12} gives
\begin{equation}\label{eq:estimateI1}
\begin{aligned}
I_1\leq [h_2(M)+h_3(M)]M^{|\beta|+k-\delta}\left(|\beta|+k\right)^{|\beta|+k-1}t^{-\mu_q}+\frac{1}{2}\phi(t)t^{-\mu_q}.
\end{aligned}
\end{equation}

\textbf{Estimate of the second term $I_2$ in \eqref{eq:case1}:}
Combining Young's convolution inequality for $1+\frac{1}{q}=\frac{1}{a}+\frac{1}{p}$ and $1\leq a,~p\leq q$, \eqref{eq:estimate2} and \eqref{eq:A2}, we have
\begin{equation}\label{I_2}
\begin{aligned}
I_2\leq&\sum_{j=0}^{k}\binom{k}{j}\int_0^{(1-\epsilon)t}\left\|D_x^\beta\partial_t^j\left[(t-s)^j\nabla G(\cdot,t-s)\right]\right\|_{L^{a}}\left\|\partial_s^{k-j}\left[s^{k-j}\mathcal{P}(u\otimes u)(s)\right]\right\|_{L^{p}}\di s\\
\leq&\sum_{j=0}^k\binom{k}{j}M_0^{\frac{|\beta|}{2}+j}(|\beta|+j)^{\frac{|\beta|+1}{2}+j+\frac{3}{2}(1-\frac{1}{a})} N(1+M^\delta)M^{k-j-2\delta}(k-j)^{k-j-1}J_2(\epsilon)t^{-\mu_q},
\end{aligned}
\end{equation}
where 
\begin{align}\label{eq:J2ep}
J_2(\epsilon)=\int_0^{1-\epsilon}(1-s)^{-\frac{|\beta|+1}{2}-\frac{3}{2}\left(1-\frac{1}{a}\right)}s^{-\frac{1}{2}-\frac{3}{2}\left(\frac{1}{3}-\frac{1}{p}\right)}\di s,
\end{align}
and $N$ comes from inequality \eqref{eq:A2}.
Here, notice that
$-\frac{1}{2}-\frac{3}{2}\left(\frac{1}{3}-\frac{1}{p}\right)>-1.$
For $\epsilon=(K|\beta|)^{-1}$, we have $J_2(\epsilon)\leq C(K|\beta|)^{\frac{|\beta|+1}{2}+\frac{3}{2}\left(1-\frac{1}{a}\right)}$
for some constant $C$, which implies
\begin{align*}
I_2\leq CN(M^{-\delta}+1)M^{|\beta|+k-\delta}\sum_{j=0}^k\binom{k}{j}\frac{M_0^{\frac{|\beta|}{2}+j}K^{\frac{|\beta|+1}{2}+\frac{3}{2}(1-\frac{1}{a})}}{M^{|\beta|+j}}(|\beta|+j)^{2+\frac{3}{2}(1-\frac{1}{a})} (|\beta|+j)^{|\beta|+j-1}(k-j)^{k-j-1}t^{-\mu_q}.
\end{align*}
When $M>\max\{M_0,K\}$ and $|\beta|+j\to\infty,$ we have
\[
\frac{M_0^{\frac{|\beta|}{2}+j}K^{\frac{|\beta|+1}{2}+\frac{3}{2}(1-\frac{1}{a})}}{M^{|\beta|+j}}(|\beta|+j)^{2+\frac{3}{2}(1-\frac{1}{a})}\to 0.
\]
Set 
\begin{align}
h_4(M):=\frac{2CN}{\lambda}\sup_{|\beta|+j\geq1}\left[\frac{M_0^{\frac{|\beta|}{2}+j}K^{\frac{|\beta|+1}{2}+\frac{3}{2}}}{M^{|\beta|+j}}(|\beta|+j)^{2+\frac{3}{2}}\right],
\end{align}
and then $h_4$ satisfies $\lim_{M\to\infty}h_4(M)=0$. We have
\begin{equation}\label{eq:I2final}
\begin{aligned}
I_2\leq h_4(M)M^{|\beta|+k-\delta}(|\beta|+k)^{|\beta|+k-1}t^{-\mu_q}. 
\end{aligned}
\end{equation}
Define $h(M):=2[h_1(M)+h_2(M)+h_3(M)+h_4(M)]$.
Then we have $\lim_{M\to\infty}h(M)=0$. Combining \eqref{eq:induction}, \eqref{firstterm}, \eqref{eq:case1}, \eqref{eq:estimateI1}, and \eqref{eq:I2final}, we finally obtain
\begin{align*}
\left\|D_x^\beta\partial_t^k\left(t^ku(t)\right)\right\|_{L^q}\leq\frac{1}{2}h(M)M^{|\beta|+k-\delta}(|\beta|+k)^{|\beta|+k-1}t^{-\mu_q}+\frac{1}{2}\phi(t)t^{-\mu_q},
\end{align*}
which implies $\phi(t)\leq h(M) M^{|\beta|+k-\delta}(|\beta|+k)^{|\beta|+k-1}$.
Hence, there exist $M$ big enough, which is independent of $\beta$ and $k$, such that $h(M)<1$, and inequality \eqref{eq:Linduction} holds for $|\beta|+k=L$ when $|\beta|>0$.
\end{proof}

\begin{proposition}[$3\leq q<\infty$ and $|\beta|=0$]\label{pro:two}
Let $u_0$, $\theta$ and $T$ satisfy the conditions in Theorem \ref{thm:analytic}. There exists $M>0$ independent of $\beta$, $k$ and $T$ such that if \eqref{eq:Linduction}, i.e.,
\begin{align*}
\left\|D_x^\beta\partial_t^k\left(t^ku(t)\right)\right\|_{L^{q}}\leq M^{|\beta|+k-\delta}(|\beta|+k)^{|\beta|+k-1}t^{-\frac{|\beta|}{2}-k-\frac{3}{2}\left(\frac{1}{3}-\frac{1}{q}\right)},\quad 3\leq q\leq\infty
\end{align*}
holds for $0<|\beta|+k\leq L-1$ for some $L\geq 2$, then the same inequality also holds for $|\beta|=0$, $k=L$, $3\leq q<\infty$ and $t\in(0,T]$.
\end{proposition}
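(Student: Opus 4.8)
The plan is to follow the scheme of Proposition~\ref{pro:one}, the crucial difference being that with $|\beta|=0$ the top-order ($j=0$) piece of the Duhamel term can no longer be reduced to the inductive hypothesis and instead produces a self-referential, Gr\"onwall-type inequality that we close not by localizing the time integral near $s=t$ but by the smallness of $\theta$. Write $\mu_q=\frac32(\frac13-\frac1q)$ as in \eqref{eq:mu} and let $\phi$ be as in \eqref{eq:phi} with $\beta=0$ and $k=L$, i.e. $\phi(t)=\sup_{0<s\le t}s^{\mu_q}\|\partial_s^L(s^Lu(s))\|_{L^q}$. A preliminary point that makes the eventual absorption legitimate: expanding $\partial_t^L(t^Lu)$ by the Leibniz rule and using \eqref{eq:regularity} shows $\|\partial_t^L(t^Lu(t))\|_{L^q}\le C_Lt^{-\mu_q}$ for some $C_L$ (possibly depending on $L$), so $\phi(t)<\infty$ for every fixed $t\in(0,T]$. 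Starting from \eqref{eq:mild} with $\beta=0$, $k=L$, I would split $\|\partial_t^L(t^Lu(t))\|_{L^q}$ into the linear part $\|\partial_t^L[t^LG(\cdot,t)\ast u_0]\|_{L^q}$, bounded by $h_1(M)M^{L-\delta}L^{L-1}t^{-\mu_q}$ via \eqref{eq:estimate3} exactly as in \eqref{firstterm}, plus the nonlinear part, to which I would apply the identity $t^L=\sum_{j=0}^L\binom{L}{j}s^{L-j}(t-s)^j$ and shift the time derivatives onto the two convolution factors as in \eqref{shijianjisuan}, reaching $\sum_{j=0}^L\binom{L}{j}\int_0^t[\partial_t^j((t-s)^j\nabla G(\cdot,t-s))]\ast[\partial_s^{L-j}(s^{L-j}\mathcal P(u\otimes u)(s))]\di s$, and then split the sum into $1\le j\le L$ and $j=0$. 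Note that no $\e$-splitting of the integral is needed here, since with $|\beta|=0$ the heat-kernel factor $\partial_t^j((t-s)^j\nabla G)$ has only the integrable singularity $(t-s)^{-\frac12-\frac32(1-\frac1a)}$ for $1\le a<\frac32$.

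For $1\le j\le L$ one has $L-j\le L-1$, so $\partial_s^{L-j}(s^{L-j}\mathcal P(u\otimes u)(s))$ is controlled by the appendix estimate \eqref{eq:A2}, which is itself a consequence of the inductive hypothesis at orders $\le L-1$; the single extreme term $j=L$ (where $L-j=0$ falls outside \eqref{eq:A2}) is bounded directly via $\|\mathcal P(u\otimes u)(s)\|_{L^p}\le A_1\|u(s)\|_{L^{2p}}^2$ together with the decay estimate \eqref{eq:q} of Theorem~\ref{thm:regularity1}. Combining these with \eqref{eq:estimate2} for the heat-kernel factor, Young's inequality for $1+\frac1q=\frac1a+\frac1p$, integration in $s$ (all exponents are $>-1$ for an admissible choice of $a,p$), and the combinatorial Lemma~\ref{lmm:multisequences} to perform the sum over $j$, one obtains for this part a bound $h_2(M)M^{L-\delta}L^{L-1}t^{-\mu_q}$ with $h_2(M)\to0$ as $M\to\infty$, entirely parallel to Proposition~\ref{pro:one}.

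The $j=0$ term $\int_0^t\nabla G(\cdot,t-s)\ast[\partial_s^L(s^L\mathcal P(u\otimes u)(s))]\di s$ is the crux. Since $\partial_s^L(s^L\mathcal P(u\otimes u)(s))$ sits at order $L$ it cannot be reached by induction; instead I would use \eqref{eq:A3}, which bounds it by $NM^{L-2\delta}L^{L-1}s^{-\frac12-\frac32(\frac13-\frac1p)}$ plus the two self-referential pieces $A_1\|[\partial_s^L(s^Lu(s))]\otimes u(s)\|_{L^p}$ and its transpose. Convolving the first summand with $\nabla G$ through \eqref{eq:A0} and integrating in $s$ gives a contribution $h_3(M)M^{L-\delta}L^{L-1}t^{-\mu_q}$, where $h_3(M)$ is a constant multiple of $M^{-\delta}$ and hence tends to $0$. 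For the self-referential pieces, H\"older's inequality gives $\|[\partial_s^L(s^Lu(s))]\otimes u(s)\|_{L^p}\le\|\partial_s^L(s^Lu(s))\|_{L^q}\|u(s)\|_{L^r}$ with $\frac1r=\frac1p-\frac1q=1-\frac1a$, and \eqref{eq:q} yields $\|u(s)\|_{L^r}\le A\theta\,s^{-\frac32(\frac13-\frac1r)}$; after convolving with $\nabla G$ and integrating in $s$ (the resulting Beta-type time integral is finite and can be kept bounded uniformly for $q\in[3,\infty)$ by fixing $a$ away from both $1$ and $\frac32$), these pieces are bounded by $C'\theta\,\phi(t)\,t^{-\mu_q}$ with $C'$ depending only on $A,A_0,A_1$.

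Collecting everything, $t^{\mu_q}\|\partial_t^L(t^Lu(t))\|_{L^q}\le[h_1+h_2+h_3](M)M^{L-\delta}L^{L-1}+C'\theta\,\phi(t)$ for all $t\in(0,T]$; taking the supremum over $t$ and using $\phi(t)<\infty$, then choosing $\theta$ small enough that $C'\theta\le\frac12$ (exactly the smallness already built into \eqref{eq:important}), one absorbs the last term to get $\phi(T)\le2[h_1+h_2+h_3](M)M^{L-\delta}L^{L-1}$. Since $h_1,h_2,h_3\to0$ as $M\to\infty$, one then fixes $M$ large enough---and at least as large as the constant produced in Proposition~\ref{pro:one}, so that one $M$ serves the whole induction step---to make $2[h_1+h_2+h_3](M)<1$, which is precisely \eqref{eq:Linduction} for $|\beta|=0$, $k=L$, $3\le q<\infty$. \textbf{The main obstacle} is this $j=0$ term: unlike the $|\beta|>0$ case, where the corresponding top-order contribution is tamed because the non-integrable singularity of $D_x^\beta\partial_t^j((t-s)^j\nabla G)$ lives only on the short window $[(1-\e)t,t]$, here no time-localization helps---losing one time derivative of the quadratic nonlinearity unavoidably re-injects $\partial_s^L(s^Lu)$ into the estimate---so the only available source of smallness is $\theta$. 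The attendant technical care lies in choosing the Young/H\"older exponents $a,p$ (possibly depending on $q$) so that every time integral converges and the constants stay uniform for $q\in[3,\infty)$, in verifying a priori that $\phi(t)<\infty$ so that the absorption is valid, and in arranging that $M$ coincides with the constant from Proposition~\ref{pro:one}.
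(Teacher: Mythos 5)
Your proposal is correct and follows essentially the same route as the paper: the same three-way split of the Duhamel sum (the $j=0$ term via \eqref{eq:A3} producing the self-referential piece, the middle terms $1\le j\le L-1$ via \eqref{eq:A2} and Lemma \ref{lmm:multisequences}, and the $j=L$ term directly via H\"older and \eqref{eq:q}), followed by absorption of the $\theta\,\phi(t)$ term using the smallness of $\theta$ and a final choice of large $M$. Your explicit a priori verification that $\phi(t)<\infty$ via \eqref{eq:regularity} is a small point of added care that the paper leaves implicit.
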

\begin{proof}
Recall our assumption for induction: assume that there exists a constant $M$ independent of $k$ such that \eqref{eq:Linduction} holds for $|\beta|=0$ and $0< k\leq L-1$ for some positive integer $L$. We are going to prove \eqref{eq:Linduction} for $k=L$. 

As in Proposition \ref{pro:one}, we will estimate the two terms in \eqref{eq:induction} with $|\beta|=0$. Because \eqref{firstterm} holds for any $|\beta|+k>0$, we only need to deal with the second term in \eqref{eq:induction}.
From  \eqref{shijianjisuan}, we have the following estimate for the second term in \eqref{eq:induction}:
\begin{equation}\label{eq:case2}
\begin{aligned}
&\left\|\partial_t^k\left[t^k\int_0^t\nabla G(\cdot,t-s)\ast\left[\mathcal{P}(u\otimes u)(s)\right]\di s\right]\right\|_{L^q}\\
\leq&  \sum_{j=0}^k\binom{k}{j}\int_0^t\left\|\left[\partial_t^j\left((t-s)^j\nabla G(\cdot,t-s)\right)\right]\ast\left[\partial_s^{k-j}\left(s^{k-j}\mathcal{P}(u\otimes u)(s)\right)\right]\right\|_{L^q}\di s\\
=&
\int_0^t\left\|\nabla G(\cdot,t-s)\ast\left[\partial_s^{k}\left(s^{k}\mathcal{P}(u\otimes u)(s)\right)\right]\right\|_{L^q}\di s\\
&+
\sum_{j=1}^{k-1}\binom{k}{j}\int_0^t\left\|\left[\partial_t^j\left((t-s)^j\nabla G(\cdot,t-s)\right)\right]\ast\left[\partial_s^{k-j}\left(s^{k-j}\mathcal{P}(u\otimes u)(s)\right)\right]\right\|_{L^{q}}\di s\\
&+
\int_0^t\left\|\left[\partial_t^k\left((t-s)^k\nabla G(\cdot,t-s)\right)\right]\ast\left(\mathcal{P}(u\otimes u)(s)\right)\right\|_{L^q}\di s=:S_1+S_2+S_3.
\end{aligned}
\end{equation}
Using Young's inequality with $1+\frac{1}{q}=\frac{1}{a}+\frac{1}{p}$, $1\leq a,~p\leq q$ and identity \eqref{eq:A3} to obtain
\begin{equation*}
\begin{aligned}
S_1\leq &\int_0^t\|\nabla G(\cdot,t-s)\|_{L^a}\left\| \partial_s^k(s^k\mathcal{P}(u\otimes u)(s))\right\|_{L^p}\di s\\
\leq& A_0NM^{k-2\delta}k^{k-1}\int_0^t(t-s)^{-\frac{1}{2}-\frac{3}{2}(1-\frac{1}{a})}s^{-\frac{1}{2}-\frac{3}{2}(\frac{1}{3}-\frac{1}{p})}\di s\\
&+A_0A_1\int_0^t(t-s)^{-\frac{1}{2}-\frac{3}{2}(1-\frac{1}{a})} \Big[\left\| \partial_s^k\left(s^ku(s)\right)\otimes u(s)\right\|_{L^p}+\left\|u(s)\otimes\partial_s^k\left(s^ku(s)\right)\right\|_{L^p}\Big]\di s.
\end{aligned}
\end{equation*}
We choose $1\leq a<\frac{3}{2}$, we have 
$-\frac{1}{2}-\frac{3}{2}\left(1-\frac{1}{a}\right)>-1,  -\frac{1}{2}-\frac{3}{2}\left(\frac{1}{3}-\frac{1}{p}\right)>-1$ and
\begin{equation}\label{eq:SS}
S_1\leq A_0NBM^{k-2\delta}k^{k-1}t^{-\mu_q}+2AA_0A_1\theta\int_0^t(t-s)^{-\frac{1}{2}-\frac{3}{2}\left(1-\frac{1}{a}\right)}s^{-\frac{3}{2}\left(-1+\frac{1}{a}+\frac{1}{3}\right)}\left\|\partial_s^k\left(s^ku(s)\right)\right\|_{L^q}\di s,
\end{equation}
where $\mu_q$ is defined by \eqref{eq:mu} with $|\beta|=0$, and $B:=\int_0^1(1-s)^{-\frac{1}{2}-\frac{3}{2}\left(1-\frac{1}{a}\right)}s^{-\frac{1}{2}-\frac{3}{2}\left(\frac{1}{3}-\frac{1}{p}\right)}\di s$. Define
\begin{align*}
h_5(M):=A_0NBM^{-\delta},
\end{align*}
and then for $\phi$ defined by \eqref{eq:phi}, we have
\begin{equation}\label{eq:S1}
\begin{aligned}
S_1\leq  h_5(M)M^{k-\delta}k^{k-1}t^{-\mu_q}+2AA_0A_1B\theta t^{-\mu_q}\cdot\phi(t).
\end{aligned}
\end{equation}

Next, we estimate the term $S_2$ in \eqref{eq:case2}. By Young's inequality with $1+\frac{1}{q}=\frac{1}{a}+\frac{1}{p}$ and $1\leq a,~p\leq q$, we have
\begin{equation*}
\begin{aligned}
S_2\leq \sum_{j=1}^{k-1}\binom{k}{j}\int_0^{t}\left\|\partial_t^j\left((t-s)^j\nabla G(\cdot,t-s)\right)\right\|_{L^{a}}\left\|\partial_s^{k-j}\left(s^{k-j}\mathcal{P}(u\otimes u)(s)\right)\right\|_{L^{p}}\di s.
\end{aligned}
\end{equation*}
Because $1\leq j\leq k-1$, we  use inequalities \eqref{eq:estimate2} and \eqref{eq:A2} to obtain
\begin{equation}\label{eq:S21}
\begin{aligned}
S_{2}\leq&\sum_{j=1}^{k-1}\binom{k}{j}M_0^jj^{\frac{1}{2}+j+\frac{3}{2}\left(1-\frac{1}{a}\right)}N\left(1+M^\delta\right)M^{k-j-2\delta}(k-j)^{k-j-1}\int_0^{t}(t-s)^{-\frac{1}{2}-\frac{3}{2}\left(1-\frac{1}{a}\right)}s^{-\frac{1}{2}-\frac{3}{2}\left(\frac{1}{3}-\frac{1}{p}\right)}\di s\\
\leq&\left[\sup_{1\leq j\leq k-1}\left(\frac{M_0}{M}\right)^jj^{\frac{3}{2}+\frac{3}{2}\left(1-\frac{1}{a}\right)}\right]\lambda NB\left(M^{-\delta}+1\right)M^{k-\delta}k^{k-1}t^{-\mu_q}.
\end{aligned}
\end{equation}
Set
$
h_6(M):=2\lambda NB\left[\sup_{j\geq1}\left(\frac{M_0}{M}\right)^jj^{3}\right],
$
and then
\begin{align}\label{eq:S2}
S_2 \leq h_6(M)M^{k-\delta}k^{k-1}t^{-\mu_q}.
\end{align}

For  the term $S_3$ in \eqref{eq:case2}, we use Young's inequality again with $1+\frac{1}{q}=\frac{1}{a}+\frac{1}{p}$, $1\leq a,~p\leq q$, and H\"older inequality with $\frac{1}{p}=\frac{1}{p_1}+\frac{1}{p_2}$ to obtain
\begin{align*}
S_3\leq& \int_0^{t}\left\|\left[\partial_t^k\left((t-s)^k\nabla G(\cdot,t-s)\right)\right]\right\|_{L^{a}}\left\|\mathcal{P}(u\otimes u)(s)\right\|_{L^{p}}\di s\\
\leq&A_1\int_0^{t}M_0^kk^{\frac{1}{2}+k+\frac{3}{2}\left(1-\frac{1}{a}\right)}(t-s)^{-\frac{1}{2}-\frac{d}{2}\left(1-\frac{1}{a}\right)}\|u(s)\|_{L^{p_1}}\|u(s)\|_{L^{p_2}}\di s\\
=& A^2A_1B\theta^2M^\delta\left(\frac{M_0}{M}\right)^k k^{\frac{3}{2}+\frac{3}{2}\left(1-\frac{1}{q}\right)}\cdot M^{k-\delta}k^{k-1}t^{-\frac{3}{2}\left(\frac{1}{3}-\frac{1}{q}\right)},
\end{align*}
where $A$ comes from \eqref{eq:q}. Set
\begin{align*}
h_7(M):=A^2A_1B\theta^2\sup_{k\geq1}\left[M^\delta\left(\frac{M_0}{M}\right)^kk^{\frac{3}{2}+\frac{3}{2} }\right],
\end{align*}
and then
\begin{align}\label{eq:S3}
S_3 \leq h_7(M)M^{k-\delta}k^{k-1}t^{-\mu_q}.
\end{align}
Combining \eqref{eq:induction}, \eqref{firstterm}, \eqref{eq:case2}, \eqref{eq:S1}, \eqref{eq:S2}, and \eqref{eq:S3}, we have the following estimate for the second term in \eqref{eq:induction}:
\begin{equation}\label{eq:case22}
\begin{aligned}
&\left\|\partial_t^k(t^k u(t))\right\|_{L^q}
\leq\tilde{h}(M)M^{k-\delta}k^{k-1}t^{-\mu_q}+2AA_0A_1B\theta t^{-\mu_q}\cdot\phi(t).
\end{aligned}
\end{equation}
where $\tilde{h}(M)=h_1(M)+h_5(M)+h_6(M)+h_7(M)$. Moreover, we have $\tilde{h}(M)\to0$ as $M\to\infty$.
Therefore,
\begin{align}
\phi(t)\leq\tilde{h}(M)M^{k-\delta}k^{k-1}+2AA_0A_1B\theta\cdot\phi(t).
\end{align}
When $\theta$ is small enough, we have 
\begin{align*}
\phi(t)\leq\frac{\tilde{h}(M)}{1-2AA_0A_1B\theta}M^{k-\delta}k^{k-1}.
\end{align*}
Hence, there exist $M$ big enough, which is independent of $\beta$ and $k$, such that $\frac{\tilde{h}(M)}{1-2AA_0A_1B\theta}<1$, and inequality \eqref{eq:Linduction} holds for $|\beta|=0$.
\end{proof}

So far, we have completed the induction to prove \eqref{eq:Linduction} for $3\leq q<\infty$.  

\begin{remark}\label{Gronwall}
In \cite{donghongjie2020jfa}, Dong and Zhang proved the time analyticity of solutions to the Navier-Stokes equations with the assumption $u\in L^\infty\left(\mathbb{R}^d\times [0,1]\right)$ $(d\in\mathbb{N})$ (see \cite[Theore 3.1]{donghongjie2020jfa}). They obtained (see  \cite[Proposition 3.4]{donghongjie2020jfa})
\begin{align}\label{eq:dongtime}
\left\|\partial^k_t(t^ku(t))\right\|_{L^\infty}\leq N^{k-\frac{2}{3}}k^{k-\frac{2}{3}}+C\int_0^t(t-s)^{-\frac{1}{2}}\left\|\partial^k_t(s^ku(\cdot,s))\right\|_{L^\infty}\di s
\end{align}
for some constant $C$ depending only on $d$, and some sufficiently large constant $N$ depending on $d$ and $\|u\|_{L^\infty}$, but independent of $k$. After one step of iteration, the above inequality becomes the Gr\"onwall type inequality, which gives 
\[
\sup_{t\in(0,1]}\left\|\partial^k_t(t^ku(t))\right\|_{L^\infty}\leq N^{k-1/2}k^{k-2/3}.
\] 
This implies the time analyticity. Notice that more regularity assumptions for the initial data are essential to obtain the boundedness of solutions, i.e., $u\in L^\infty\left(\mathbb{R}^d\times [0,1]\right)$. In this paper, we do not assume $u\in L^\infty\left(\mathbb{R}^3\times[0,T]\right)$, and the initial data are only required in $L^3(\mathbb{R}^3)$. In this case, we could obtain the following inequality:
\begin{align}\label{eq:G}
\left\|\partial^k_t(t^ku(t))\right\|_{L^q}\leq M^{k-\delta}k^{k-1}t^{\frac{3}{2}\left(\frac{1}{3}-\frac{1}{q}\right)}+C\theta\int_0^t(t-s)^{-\frac{1}{2}-\frac{3}{2}\left(1-\frac{1}{a}\right)}s^{-\frac{3}{2}\left(-1+\frac{1}{a}+\frac{1}{3}\right)}\left\|\partial_s^k\left(s^ku(\cdot,s)\right)\right\|_{L^q}\di s,
\end{align}
where $\theta$ is a constant depends on initial datum and the local existing time $T$ (see Theorem  \ref{thm:contraction} and Remark \ref{rmk:theta} for details).
The above inequality cannot imply the boundedness of $\left\|\partial^k_t(t^ku(t))\right\|_{L^q}$ for $3\leq q<\infty$ from the Gr\"onwall type inequality. We will use the smallness of $\theta$ to overcome this difficulty.
\end{remark}

\begin{proposition}[$q=\infty$]\label{pro:three}
Let $u_0$, $\theta$ and $T$ satisfy the conditions in Theorem \ref{thm:analytic}. There exists $M>0$ independent of $\beta$, $k$ and $T$ such that if \eqref{eq:Linduction},i.e.,
\begin{align*}
\left\|D_x^\beta\partial_t^k\left(t^ku(t)\right)\right\|_{L^{q}}\leq M^{|\beta|+k-\delta}(|\beta|+k)^{|\beta|+k-1}t^{-\frac{|\beta|}{2}-k-\frac{3}{2}\left(\frac{1}{3}-\frac{1}{q}\right)},\quad 3\leq q\leq\infty
\end{align*}
 holds for $0<|\beta|+k\leq L-1$ for some $L\geq 2$, then inequality \eqref{eq:Linduction} holds for $q=\infty$ ,$|\beta|+k=L$ and $t\in(0,T]$.
\end{proposition}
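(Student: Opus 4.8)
The plan is to obtain the $q=\infty$ estimate at order $|\beta|+k=L$ directly from the finite exponent estimates already established at the same order in Propositions \ref{pro:one} and \ref{pro:two}, with no fresh induction. Those two propositions give, for every $\beta,k$ with $|\beta|+k=L$ and every $t\in(0,T]$,
\begin{equation*}
\left\|D_x^\beta\partial_t^k\left(t^ku(t)\right)\right\|_{L^q(\mathbb{R}^3)}\le M^{L-\delta}L^{L-1}\,t^{-\frac{|\beta|}{2}-\frac{3}{2}\left(\frac{1}{3}-\frac{1}{q}\right)},\qquad 3\le q<\infty,
\end{equation*}
and, as is visible from the way the auxiliary functions $h_1,\dots,h_7$ were set up (their $q$-dependent exponents $\tfrac32(\tfrac13-\tfrac1q)$ were replaced throughout by the $q$-free bound $\tfrac32$), the constant $M$ here may be taken independent of $q$. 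Fixing $t\in(0,T]$ and writing $v:=D_x^\beta\partial_t^k(t^ku(t))$, I would pass to the limit $q\to\infty$: since $v\in L^3(\mathbb{R}^3)$ by \eqref{eq:regularity}, the set $\{|v|>c\}$ has finite measure for every $c>0$, hence $\liminf_{q\to\infty}\|v\|_{L^q}\ge\|v\|_{L^\infty}$; combining this with the displayed uniform bound and $t^{-\frac32(\frac13-\frac1q)}\to t^{-\frac12}$ forces $v\in L^\infty(\mathbb{R}^3)$ together with
\begin{equation*}
\left\|D_x^\beta\partial_t^k\left(t^ku(t)\right)\right\|_{L^\infty(\mathbb{R}^3)}\le M^{L-\delta}L^{L-1}\,t^{-\frac{|\beta|}{2}-\frac12}=M^{L-\delta}L^{L-1}\,t^{-\mu_\infty},
\end{equation*}
which is exactly \eqref{eq:Linduction} for $q=\infty$ and $|\beta|+k=L$.

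If a reader prefers an argument that stays inside the mild formula machinery (a genuine ``bootstrap'' from finite $q$ to $q=\infty$), I would instead rerun the scheme of Propositions \ref{pro:one}--\ref{pro:two} with target $q=\infty$. The heat term in \eqref{eq:induction} is already covered by \eqref{firstterm} at $q=\infty$; for the Duhamel term I would use the decomposition \eqref{shijianjisuan} together with Young's inequality $\|f\ast g\|_{L^\infty}\le\|f\|_{L^a}\|g\|_{L^{a'}}$ for a fixed $a\in(1,\tfrac32)$, so that the kernel singularity $(t-s)^{-\frac12-\frac32(1-\frac1a)}$ coming from \eqref{eq:A0} and \eqref{eq:estimate2} remains integrable while the conjugate exponent $a'=\tfrac{a}{a-1}\in(3,\infty)$ is finite. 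The factor $\partial_s^{k-j}\!\big(s^{k-j}\mathcal P(u\otimes u)(s)\big)$ is then estimated in the finite space $L^{a'}$ by \eqref{eq:A2} when $j\ge1$ (the order having dropped to $L-1$) and by \eqref{eq:A3} when $j=0$, the top order contribution $\big[D_x^\beta\partial_s^k(s^ku(s))\big]\otimes u(s)$ being split by H\"older's inequality and controlled by the now-available finite-$q$ estimate \eqref{eq:Linduction} at order $L$ and by Theorem \ref{thm:regularity1}; the crucial point is that here the smallness of $\theta$, rather than any absorption of a $\phi(t)$-type quantity, closes the estimate. For $|\beta|>0$ the integral is still cut into $\int_0^{(1-\e)t}$ and $\int_{(1-\e)t}^t$ with $\e=(K|\beta|)^{-1}$, placing $D_x^\beta$ on the heat kernel on the first piece and on the nonlinearity on the second, exactly as in Proposition \ref{pro:one}, and the resulting $(\e t)^{-\frac{|\beta|}{2}-\cdots}$ is what manufactures the $(|\beta|+k)^{|\beta|+k}$-type growth.

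The only delicate point — and it arises solely in the second route — is the time-convolution bookkeeping: one must verify that each integral of the form $\int(t-s)^{-\frac12-\frac32(1-\frac1a)}s^{-\gamma}\di s$ has exponent $\gamma<1$ and reduces to a constant multiple of $t^{-\mu_\infty}$, and that all the constants thus produced can be packaged into functions of $M$ tending to $0$ (apart from one fixed, $\theta$-small term), so that the same $M$ fixed in Propositions \ref{pro:one} and \ref{pro:two} still keeps the total coefficient below $1$. The first route sidesteps this bookkeeping entirely and is the one I would actually write down, invoking the second only as a remark ensuring the argument can be made self-contained.
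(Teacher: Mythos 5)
Your primary route (passing to the limit $q\to\infty$ in the finite-$q$ estimates at order $L$) is not the paper's argument, and it rests on a claim the paper explicitly declines to make: in the proof of this proposition the authors write that ``the constant $M$ might depend on $q$, we can not pass to the limit $q\to\infty$.'' Your justification for $q$-uniformity --- that the auxiliary functions $h_1,\dots,h_7$ replace the exponents $\tfrac32(\tfrac13-\tfrac1q)$ by $\tfrac32$ --- addresses only the harmless part of the $q$-dependence. The real issue sits in the Young exponents $a,p$ with $1+\tfrac1q=\tfrac1a+\tfrac1p$ and $p\le q$: with the natural choice $p=3$ one gets $a=\tfrac{3q}{2q+3}\uparrow\tfrac32$ as $q\to\infty$, so the kernel singularity $(t-s)^{-\frac12-\frac32(1-\frac1a)}$ tends to the non-integrable $(t-s)^{-1}$ and the constants $J_1$, $J_2$, $B$, $K$ feeding into $h_2,\dots,h_7$ blow up. One can in fact repair this by a $q$-dependent choice such as $p=\min\{q,4\}$, which keeps both singularity exponents uniformly above $-1$, but neither the paper nor your proposal verifies this, and without it the uniform bound you pass to the limit with is not established. (The limiting step itself, $\liminf_{q\to\infty}\|v\|_{L^q}\ge\|v\|_{L^\infty}$ for $v\in L^3$, is correct.) As written, the route you say you ``would actually write down'' therefore has a genuine gap at exactly the point the paper identifies as the obstruction.

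Your second route is essentially the paper's proof: keep the Duhamel decomposition \eqref{shijianjisuan}, cut the time integral (the paper cuts at $t/2$ rather than $(1-\e)t$), place $D_x^\beta$ on the heat kernel for small $s$ and on the nonlinearity for $s$ near $t$, and close with Young's inequality $\|f\ast g\|_{L^\infty}\le\|f\|_{L^a}\|g\|_{L^{a'}}$ together with \eqref{eq:estimate2}, \eqref{eq:A2}--\eqref{eq:A3} and the already-proved finite-$q$ bound at order $L$. Your choice $a\in(1,\tfrac32)$, $a'\in(3,\infty)$ is actually more careful than the paper's sketched $L^{3/2}$--$L^{3}$ pairing, for which the factor $\|\partial_t^j((t-s)^j\nabla G)\|_{L^{3/2}}\sim(t-s)^{-1}$ is borderline non-integrable on $(t/2,t)$ when $j\ge1$. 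If you promote this second route to the main argument and carry out the time-convolution bookkeeping you flag, you recover the paper's proof; as submitted, with the limit argument in the lead role, the proposal is incomplete.
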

\begin{proof}
We will sketch the main idea. From Proposition \ref{pro:one} and Proposition \ref{pro:two}, there exists $M>0$ such that
\begin{align}\label{eq:Linduction3}
\left\|D_x^\beta\partial_t^k\left(t^k u(t)\right)\right\|_{L^{q}}\leq M^{|\beta|+k}(|\beta|+k)^{|\beta|+k}t^{-\mu_q},\quad 3\leq q<\infty.
\end{align}
Since the constant $M$ might depend on $q$, we can not pass to the limit $q\to\infty$ to obtain the estimate for $L^\infty$ norm.
We are going to use the results for $3\leq q<\infty$ to prove \eqref{eq:Linduction} for $q=\infty$. We only need to estimate the $L^\infty$ norm for the second term in \eqref{eq:induction}. From \eqref{shijianjisuan}, we have
\begin{align*}
&\left\|D_x^\beta\partial_t^k\left[t^k\int_0^t\nabla G(\cdot,t-s)\ast\left[\mathcal{P}(u\otimes u)(s)\right]\di s\right]\right\|_{L^\infty}\\
=&\left\|D_x^\beta\sum_{j=0}^k\binom{k}{j}\int_0^t\left[\partial_t^j((t-s)^j\nabla G(\cdot,t-s))\right]\ast\left[\partial_s^{k-j}(s^{k-j}\mathcal{P}(u\otimes u)(s)\right]\di s\right\|_{L^\infty}\\
\leq&\sum_{j=0}^k\binom{k}{j}\int_0^{t/2}\left\|D_x^\beta\partial_t^j((t-s)^j\nabla G(\cdot,t-s))\right\|_{L^{\frac{3}{2}}}\left\|\partial_s^{k-j}(s^{k-j}\mathcal{P}(u\otimes u)(s))\right\|_{L^3}\di s \\
&+ \sum_{j=0}^k\binom{k}{j}\int_{t/2}^t\left\|\partial_t^j((t-s)^j\nabla G(\cdot,t-s))\right\|_{L^{\frac{3}{2}}}\left\|D_x^\beta\partial_s^{k-j}(s^{k-j}\mathcal{P}(u\otimes u)(s))\right\|_{L^{3}}\di s.
\end{align*}
We only need to use \eqref{eq:estimate2} and an inequality similar to \eqref{eq:A2} to estimate the above two terms separately. Then, we could obtain \eqref{eq:Linduction3} for $q=\infty$ with some constant slightly bigger than $M$.
\end{proof}

Now, we are ready to give the proof of Theorem \ref{thm:analytic}.
\begin{proof}[Proof of theorem \ref{thm:analytic}]
Assume $k>0$. Notice that for $j=0,1,2,\cdots,k-1$, we have
\begin{equation*}
\begin{aligned}
t^jD_x^\beta\partial_t^k\left(t^{k-j}u(t)\right)=&t^j\left[D_x^\beta\sum_{i=0}^k\binom{k}{i}\partial_t^it\partial_t^{k-i}\left(t^{k-j-1}u(t)\right)\right]\\
=&t^j\left[D_x^\beta t\partial_t^k\left(t^{k-j-1}u(t)\right)+kD_x^\beta\partial_t^{k-1}\left(t^{k-j-1}u(t)\right)\right]\\
=&t^{j+1}\left[D_x^\beta\partial_t^k\left(t^{k-j-1}u(t)\right)\right]+kt^j\left[D_x^\beta\partial_t^{k-1}\left(t^{k-j-1}u(t)\right)\right],
\end{aligned}
\end{equation*}
which implies
\begin{align}\label{eq:kinduc}
t^{j+1}\left[D_x^\beta\partial_t^k\left(t^{k-j-1}u(t)\right)\right]=t^jD_x^\beta\partial_t^k\left(t^{k-j}u(t)\right)-kt^j\left[D_x^\beta\partial_t^{k-1}\left(t^{k-j-1}u(t)\right)\right].
\end{align}
Combining $j=0$ in \eqref{eq:kinduc} and inequality \eqref{eq:Linduction}, we have
\begin{align*}
\sup_{t\in(0,T]}\left\|t^{\mu_q+1}D_x^\beta\partial_t^k\left(t^{k-1}u(t)\right)\right\|_{L^q}\leq&\sup_{t\in(0,T]}\left\|t^{\mu_q}D_x^\beta\partial_t^k(t^ku(t))\right\|_{L^q}+\sup_{t\in(0,T]}\left\|kt^{\mu_q}D_x^\beta\partial_t^{k-1}(t^{k-1}u(t))\right\|_{L^q}\\
\leq&M^{|\beta|+k-\delta}\left(1+\frac{1}{M}\right)\left(|\beta|+k\right)^{|\beta|+k-1}.
\end{align*}
Similarly, for any $0<\tilde{k}\leq k$ we have
\begin{align*}
\sup_{t\in(0,T]}\left\|t^{\mu_q+1}D_x^\beta\partial_t^{\tilde{k}}\left(t^{\tilde{k}-1}u(t)\right)\right\|_{L^q}
\leq M^{|\beta|+\tilde{k}-\delta}\left(1+\frac{1}{M}\right)\left(|\beta|+\tilde{k}\right)^{|\beta|+\tilde{k}-1}.
\end{align*}
Repeat the above process for $j=1,2,\cdots,k-1$ and we obtain
\begin{align*}
\sup_{t\in(0,T]}\left\|t^{\mu_q+k}D_x^\beta\partial_t^ku(t)\right\|_{L^q}\leq M^{|\beta|+k-\delta}\left(1+\frac{1}{M}\right)^k\left(|\beta|+k\right)^{|\beta|+k-1},
\end{align*}
which implies that inequality \eqref{eq:analytic} holds for $M+1$.
\end{proof}
\begin{remark}[Radius of analyticity]\label{rmk:radius}
Let $\alpha\in\mathbb{N}^3$ be a multi-index with $|\alpha|=m$. According to the equality $\left(\sum_{i=1}^3x_i\right)^m=\sum_{|\alpha|=m}\binom{m}{\alpha}\Pi_{i=1}^3x_i^{\alpha_i}$ we have 
\begin{align}\label{eq:binom}
\alpha !\leq m !\leq 3^m\alpha !.
\end{align}
Notice that for initial datum $u_0$ with $\left\|u_0\right\|_{L^3(\mathbb{R}^3)}$, the constant $M$ is independent of time. Combining \eqref{eq:binom} and Stirling's formula, we have the following estimate for the radius $r(t)$ of analyticity of the mild solution:
\begin{align}\label{eq:radius}
r(t)=\lim_{|\beta|+k\to\infty}\left(\frac{\|D_x^\beta\partial_t^ku(t)\|_{L^\infty}}{\beta ! k!}\right)^{-\frac{1}{|\beta|+k}}\geq C\min\{\sqrt{t},t\},\quad t\in(0,\infty).
\end{align}
Similarly, for the space analytic radius $r_1(t)$, we have $r_1(t)\geq C\sqrt{t}$, and for the time analytic radius $r_2(t)$, we have $r_2(t)\geq Ct$ for any $t>0$.
\end{remark}

\noindent\textbf{Acknowledgements} 
Y. Gao is supported by the National Natural Science Foundation grant 12101521 of China and the Start-Up Fund from the Hong Kong Polytechnic University. X. Xue is supported by the Chinese Natural Science Foundation grants 11731010 and 11671109.

\appendix

\section{Useful lemmas}\label{app}
The next lemma about calculation of multi-indexes comes from \cite[Lemma 2.1]{kahane1969}:
\begin{lemma}\label{lmm:multisequences}
	Let $\kappa\in\mathbb{N}^d$ be a multi-index. If either $\delta$ or $\epsilon<-\frac{1}{2}$, then
	\begin{align}\label{multisequence}
		\sum_{\beta+\gamma=\kappa}\frac{\kappa!}{\beta!\gamma!}\left|\beta\right|^{|\beta|+\delta} \left|\gamma\right|^{|\gamma|+\epsilon}\leq \lambda \left|\kappa\right|^{|\kappa|+max\{\delta, \epsilon\}},
	\end{align}
	where $\lambda$ depends on $\delta$ and $\epsilon$. Here, we used $0^p=1$ for any $p\in\mathbb{R}$.
\end{lemma}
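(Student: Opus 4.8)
The plan is to collapse the $d$-dimensional multi-index sum to a one-variable sum over $j:=|\beta|$ and then estimate that sum by Stirling's formula together with an elementary power-sum bound; the dimension $d$ disappears after the first step.

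\textbf{Step 1 (reduction to one variable).} Group the terms according to the value $j$ of $|\beta|$. Then $|\gamma|=n-j$ with $n:=|\kappa|$, and the factor $|\beta|^{|\beta|+\delta}|\gamma|^{|\gamma|+\epsilon}=j^{\,j+\delta}(n-j)^{\,n-j+\epsilon}$ depends on $\beta$ only through $j$. Since $\tfrac{\kappa!}{\beta!\gamma!}=\prod_{i=1}^d\binom{\kappa_i}{\beta_i}$, comparing the coefficients of $x^j$ on the two sides of $\prod_{i}(1+x)^{\kappa_i}=(1+x)^n$ (Vandermonde) gives $\sum_{|\beta|=j,\ \beta\le\kappa}\tfrac{\kappa!}{\beta!\gamma!}=\binom{n}{j}$. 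Hence the left-hand side of \eqref{multisequence} equals $\sum_{j=0}^n\binom{n}{j}\,j^{\,j+\delta}(n-j)^{\,n-j+\epsilon}$, which no longer depends on $d$. For $n\in\{0,1\}$ the inequality is immediate (with the convention $0^p=1$), so from now on $n\ge2$; by the symmetry $(\beta,\delta)\leftrightarrow(\gamma,\epsilon)$ we may assume $\epsilon<-\tfrac12$.

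\textbf{Step 2 (endpoints and Stirling).} The terms $j=0$ and $j=n$ are exactly $n^{\,n+\epsilon}$ and $n^{\,n+\delta}$, both $\le n^{\,n+\max\{\delta,\epsilon\}}$ for $n\ge1$. For $1\le j\le n-1$, the two-sided Stirling inequality $\sqrt{2\pi m}\,(m/e)^m\le m!\le e\sqrt{m}\,(m/e)^m$ (valid for every $m\ge1$) yields, after the powers of $e$ cancel,
\begin{align*}
\binom{n}{j}\,j^{\,j+\delta}(n-j)^{\,n-j+\epsilon}\ \le\ \frac{e}{2\pi}\,n^{\,n+1/2}\,j^{\,\delta-1/2}\,(n-j)^{\,\epsilon-1/2}.
\end{align*}
Thus it remains to prove the one-variable estimate $\sum_{j=1}^{n-1}j^{\,\delta-1/2}(n-j)^{\,\epsilon-1/2}\le C\,n^{\,\max\{\delta,\epsilon\}-1/2}$ with $C=C(\delta,\epsilon)$.

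\textbf{Step 3 (power-sum estimate --- the main point).} This is where the hypothesis is used: $\epsilon<-\tfrac12$ means $\epsilon-\tfrac12<-1$. Split the sum at $j=n/2$. For $1\le j\le n/2$, bound $(n-j)^{\,\epsilon-1/2}\le(n/2)^{\,\epsilon-1/2}$ and estimate $\sum_{1\le j\le n/2}j^{\,\delta-1/2}$: it is at most a convergent constant when $\delta<-\tfrac12$, and it is $\lesssim n^{\,\delta+1/2}$ (or $\lesssim\log n$ when $\delta=-\tfrac12$) otherwise; since $\epsilon+\tfrac12<0$, in every subcase this half of the sum is $\lesssim n^{\,\max\{\delta,\epsilon\}-1/2}$. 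For $n/2\le j\le n-1$, bound $j^{\,\delta-1/2}\lesssim n^{\,\delta-1/2}$ and use $\sum_{i\ge1}i^{\,\epsilon-1/2}<\infty$ to get a contribution $\lesssim n^{\,\delta-1/2}\le n^{\,\max\{\delta,\epsilon\}-1/2}$. Adding the interior and the endpoint bounds and absorbing all numerical constants into $\lambda=\lambda(\delta,\epsilon)$ finishes the proof. The Stirling step is routine, and the only place that needs a little bookkeeping is this last estimate, where one organizes the argument by the sign of $\delta-\tfrac12$ (including the borderline $\delta=-\tfrac12$) while always exploiting $\epsilon-\tfrac12<-1$ to keep the half of the sum far from $j=n$ summable; no analytic input whatsoever is required, as the lemma is purely combinatorial.
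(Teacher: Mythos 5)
Your proof is correct. Note that the paper itself does not prove this lemma at all --- it is imported verbatim as \cite[Lemma 2.1]{kahane1969} --- so there is no in-paper argument to compare against; you have supplied a complete, self-contained proof where the authors give only a citation. Your route (collapse the multi-index sum to $\sum_{j=0}^n\binom{n}{j}j^{j+\delta}(n-j)^{n-j+\epsilon}$ via Vandermonde, peel off the endpoint terms, apply two-sided Stirling to reduce to $\sum_{j=1}^{n-1}j^{\delta-1/2}(n-j)^{\epsilon-1/2}$, and split at $n/2$) is the standard one for such combinatorial Gevrey estimates, and the delicate points are all handled: the symmetry reduction to $\epsilon<-\tfrac12$ is legitimate because the right-hand side is symmetric in $(\delta,\epsilon)$; the borderline case $\delta=-\tfrac12$ is absorbed because $n^{\epsilon+1/2}\log n\to0$; and the half of the sum near $j=n$ is controlled precisely because $\epsilon-\tfrac12<-1$ makes $\sum_i i^{\epsilon-1/2}$ convergent, which is exactly where the hypothesis enters (indeed, for $\delta=\epsilon=0$ the sum is of order $n^{n+1/2}$, so the hypothesis is genuinely needed and your proof uses it in the right place). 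The convention $0^p=1$ is applied consistently at $j=0$ and $j=n$, and the small cases $n\in\{0,1\}$ are checked. No gaps.
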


The following useful lemma comes from  \cite[Lemma 3.3]{donghongjie2020jfa}:
\begin{lemma}\label{eq:donglemm} 
	Let $f$ and $g$ be two smooth function on $\mathbb{R}$, for any integer $k\geq 1$, we have
	\begin{multline}\label{eq:timeestimate}
		\partial_t^k\left(t^kf(t)g(t)\right)=\sum_{j=0}^k\binom{k}{j}\partial_t^j\left(t^jf(t)\right)\partial_t^{k-j}\left(t^{k-j}g(t)\right)\\
		-k\sum_{j=0}^{k-1}\binom{k-1}{j}\partial_t^j\left(t^jf(t)\right)\partial_t^{k-1-j}\left(t^{k-1-j}g(t)\right).	
	\end{multline}
\end{lemma}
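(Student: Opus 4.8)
The plan is to prove \eqref{eq:timeestimate} by first turning $\partial_t^{k}(t^{k}\,\cdot\,)$ into a recursively defined operator and then running an exponential generating function argument in an auxiliary variable. The one fact I would record at the start is that, for every smooth $h$ and every $k\ge 0$,
\begin{equation*}
\partial_t^{k+1}\big(t^{k+1}h\big)=t\,\partial_t\!\big(\partial_t^{k}(t^{k}h)\big)+(k+1)\,\partial_t^{k}(t^{k}h),
\end{equation*}
which is the ordinary Leibniz rule applied to $t^{k+1}h=t\cdot(t^{k}h)$ (only the $j=0$ and $j=1$ terms survive because $\partial_t^{j}t=0$ for $j\ge 2$). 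Setting $\theta:=t\partial_t$, this says $\partial_t^{k}(t^{k}\,\cdot\,)=\prod_{i=1}^{k}(\theta+i)$, so in particular all these operators commute — the structural feature that makes the generating-function route available.

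Next I would introduce, for a fixed smooth $h$, the function $\Psi_h(x,t):=\frac{1}{1-x}\,h\!\big(\frac{t}{1-x}\big)$ and claim that its $x$-Taylor coefficients at $x=0$ are exactly the objects we want, namely $\partial_x^{k}\Psi_h\big|_{x=0}=\partial_t^{k}\big(t^{k}h\big)$ for all $k$. To see this, a short chain-rule computation gives the first-order relation $(1-x)\,\partial_x\Psi_h=\partial_t(t\,\Psi_h)$; writing $\Psi_h=\sum_{k\ge0}\psi_k(t)\,x^{k}/k!$ and matching coefficients of $x^{k}/k!$ yields $\psi_{k+1}=\partial_t(t\psi_k)+k\psi_k=(\theta+k+1)\psi_k$ with $\psi_0=h$. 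Since $\partial_t^{k}(t^{k}h)$ satisfies the very same recursion with the same value at $k=0$ (by the displayed identity above), we get $\psi_k=\partial_t^{k}(t^{k}h)$.

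The payoff is that $h\mapsto h\!\big(\frac{t}{1-x}\big)$ is a pointwise substitution, hence multiplicative, so
\begin{equation*}
\Psi_{fg}(x,t)=\frac{1}{1-x}\,f\!\Big(\tfrac{t}{1-x}\Big)\,g\!\Big(\tfrac{t}{1-x}\Big)=\frac{1}{1-x}\big[(1-x)\Psi_f\big]\big[(1-x)\Psi_g\big]=(1-x)\,\Psi_f(x,t)\,\Psi_g(x,t).
\end{equation*}
Now I would apply $\partial_x^{k}\big|_{x=0}$ to both sides and multiply by $k!$. On the right, the Cauchy product $\Psi_f\Psi_g$ contributes $\sum_{j=0}^{k}\binom{k}{j}\partial_t^{j}(t^{j}f)\,\partial_t^{k-j}(t^{k-j}g)$, while the extra factor $(1-x)$ subtracts off the one-lower-order convolution times $k$, i.e. $k\sum_{j=0}^{k-1}\binom{k-1}{j}\partial_t^{j}(t^{j}f)\,\partial_t^{k-1-j}(t^{k-1-j}g)$; on the left, by the previous paragraph, we get $\partial_t^{k}(t^{k}fg)$. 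This is exactly \eqref{eq:timeestimate}.

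I do not expect a genuine obstacle here; the issue is only choosing the cleanest packaging, and the generating-function argument is designed precisely to absorb the combinatorics. A fully elementary alternative is induction on $k$ directly in \eqref{eq:timeestimate}: differentiate the $k$-version, use $t\partial_t\partial_t^{j}(t^{j}h)=\partial_t^{j+1}(t^{j+1}h)-(j+1)\partial_t^{j}(t^{j}h)$ and the Pascal identity $\binom{k}{j}+\binom{k}{j-1}=\binom{k+1}{j}$, and recombine. The only real work there is the index bookkeeping needed to fold the two sums into their $k+1$ counterparts, which is exactly the step the generating-function proof automates.
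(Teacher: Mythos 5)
Your identity and your argument are both correct, but note that the paper itself offers no proof of this lemma at all: it simply cites \cite[Lemma 3.3]{donghongjie2020jfa}, where the identity is established by the elementary induction on $k$ that you sketch as your ``fully elementary alternative.'' Your primary route is therefore genuinely different and self-contained. The key observations all check out: the Leibniz rule on $t^{k+1}h=t\cdot(t^k h)$ gives $\partial_t^{k+1}(t^{k+1}h)=(\theta+k+1)\,\partial_t^{k}(t^{k}h)$ with $\theta=t\partial_t$, so $\partial_t^k(t^k\cdot)=\prod_{i=1}^k(\theta+i)$; the relation $(1-x)\partial_x\Psi_h=\partial_t(t\Psi_h)$ holds by a direct chain-rule computation and forces $\partial_x^k\Psi_h|_{x=0}=\partial_t^k(t^kh)$; and the factorization $\Psi_{fg}=(1-x)\Psi_f\Psi_g$ together with the Leibniz rule in $x$ produces exactly the two sums in \eqref{eq:timeestimate} (the Cauchy product gives the first, the extra factor $-x$ gives the $-k\sum_{j=0}^{k-1}\binom{k-1}{j}(\cdots)$ correction). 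The only point worth tightening is presentational: since $f$ and $g$ are merely smooth, the expansion $\Psi_h=\sum_k\psi_k x^k/k!$ should not be read as a convergent series but as the definition $\psi_k:=\partial_x^k\Psi_h|_{x=0}$, with ``matching coefficients'' replaced by applying $\partial_x^k|_{x=0}$ to the identities of smooth functions and using the Leibniz rule; every step uses only finitely many $x$-derivatives at $x=0$, so nothing is lost. What your approach buys is a one-line explanation of where the curious two-sum structure comes from (it is just multiplication of generating functions followed by the shift induced by the factor $1-x$), at the cost of introducing the auxiliary variable; the inductive proof in the cited source is more pedestrian but avoids any discussion of formal expansions.
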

\begin{proposition}\label{pro:spacetime}
Let  $\beta\in\mathbb{N}^3$ be a multi-index and $L\geq 2$, $k$ be nonnegative integers.  For $0< |\beta|+k\leq L-1$ and $3\leq q<\infty$, if there exist some constants $0<\delta<1$, $M>0$ independent of $|\beta|$ and $k$ such that
\begin{align}\label{fineestimate}
\left\|D_x^\beta\partial_t^k\left(t^ku(t)\right)\right\|_{L^q}\leq M^{|\beta|+k-\delta}\left(|\beta|+k\right)^{|\beta|+k-1}t^{-\frac{|\beta|}{2}-\frac{3}{2}\left(\frac{1}{3}-\frac{1}{q}\right)},~~0< |\beta|+k\leq L-1,
\end{align}
then we have 
\begin{align}\label{fineestimate1}
\left\|D_x^\beta\partial_t^k\left[t^k\mathcal{P}(u\otimes u)(t)\right]\right\|_{L^p}\leq N(1+M^\delta)M^{|\beta|+k-2\delta}\left(|\beta|+k\right)^{|\beta|+k-1}t^{-\frac{|\beta|+1}{2}-\frac{3}{2}\left(\frac{1}{3}-\frac{1}{p}\right)}
\end{align}
for any $3\leq p<\infty$ and $0<|\beta|+k\leq L-1$, and  we also have
\begin{multline}\label{fineestimate2}
\left\|D_x^\beta\partial_t^k\left[t^k\mathcal{P}(u\otimes u)(t)\right]\right\|_{L^p}\leq NM^{|\beta|+k-2\delta}\left(|\beta|+k\right)^{|\beta|+k-1}t^{-\frac{|\beta|+1}{2}-\frac{3}{2}\left(\frac{1}{3}-\frac{1}{p}\right)}\\
+A_1\left\|\left[D_x^\beta\partial_t^k\left(t^ku(t)\right)\right]\otimes u(t)\right\|_{L^p}+A_1\left\|u(t)\otimes \left[D_x^\beta\partial_t^k\left(t^ku(t)\right)\right]\right\|_{L^p}
\end{multline}
for  any $3\leq p<\infty$  and $|\beta|+k=L$. Here, $N>0$ is a constant  independent of $M$, $\beta$ and $k$ and $C$ is a constant independent of $3\leq p<\infty$.
\end{proposition}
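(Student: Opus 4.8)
The plan is to estimate $\mathcal{P}(u\otimes u)$ by first using that $\mathcal{P}\colon L^p\to L^p$ is bounded for $1<p<\infty$ with constant $A_1$, reducing everything to bounds on $D_x^\beta\partial_t^k\bigl(t^k(u\otimes u)(t)\bigr)$ in $L^p$. The key algebraic tool is Lemma~\ref{eq:donglemm} applied componentwise to the product $u^a u^b$: writing $f=u^a$, $g=u^b$,
\begin{align*}
\partial_t^k\bigl(t^k u^a u^b\bigr)=\sum_{j=0}^k\binom{k}{j}\partial_t^j\bigl(t^j u^a\bigr)\partial_t^{k-j}\bigl(t^{k-j}u^b\bigr)-k\sum_{j=0}^{k-1}\binom{k-1}{j}\partial_t^j\bigl(t^j u^a\bigr)\partial_t^{k-1-j}\bigl(t^{k-1-j}u^b\bigr).
\end{align*}
Then I would apply $D_x^\beta$, distribute it via the Leibniz rule over the space multi-index ($D_x^\beta = \sum_{\gamma+\gamma'=\beta}\binom{\beta}{\gamma}D_x^\gamma\otimes D_x^{\gamma'}$ acting on the two factors), and use H\"older's inequality with exponents $\tfrac1p=\tfrac1{p_1}+\tfrac1{p_2}$ chosen so that each factor lands in a space where I have an estimate. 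Concretely, $\|D_x^\gamma\partial_t^j(t^j u)\|_{L^{p_1}}$ and $\|D_x^{\gamma'}\partial_t^{k-j}(t^{k-j}u)\|_{L^{p_2}}$ will each be controlled by the induction hypothesis~\eqref{fineestimate} (for the "inner" terms where both $|\gamma|+j>0$ and $|\gamma'|+(k-j)>0$), picking up a factor $M^{|\gamma|+j-\delta}(|\gamma|+j)^{|\gamma|+j-1}$ and similarly for the other, plus the time-power $t^{-\mu}$ with exponents adding up correctly. The terms where one of the two indices is zero are handled by Theorem~\ref{thm:regularity1} (the $L^q$ estimate $\|u(t)\|_{L^q}\le A\theta t^{-3(1/3-1/q)/2}$), which is exactly where the extra undifferentiated factor $u(t)$ and the constant $A$ (hence $\theta$) enter.

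After H\"older, the combinatorial sum to control is of the shape
\begin{align*}
\sum_{\gamma+\gamma'=\beta}\ \sum_{j=0}^k\binom{\beta}{\gamma}\binom{k}{j}(|\gamma|+j)^{|\gamma|+j-1}(|\gamma'|+k-j)^{|\gamma'|+k-j-1},
\end{align*}
which is precisely of the form handled by Lemma~\ref{lmm:multisequences} with $\delta=\epsilon=-1$ (both $<-\tfrac12$), yielding the bound $\lambda(|\beta|+k)^{|\beta|+k-1}$ and collapsing the double sum into the single power $(|\beta|+k)^{|\beta|+k-1}$. Tracking the $M$-powers: each "inner" product term contributes $M^{(|\gamma|+j-\delta)+(|\gamma'|+k-j-\delta)}=M^{|\beta|+k-2\delta}$, which is the exponent appearing in \eqref{fineestimate1}; the "boundary" terms (one index zero) contribute only $M^{|\beta|+k-\delta}$ times a factor $A\theta$, so they are absorbed into the $N(1+M^\delta)M^{|\beta|+k-2\delta}=NM^{|\beta|+k-2\delta}+NM^{|\beta|+k-\delta}$ on the right, with $N$ depending on $A_1$, $A\theta$, $\lambda$, and the constant $M_0$ from the heat-kernel estimates but not on $M$, $\beta$, $k$. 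For \eqref{fineestimate2} (the case $|\beta|+k=L$, where the induction hypothesis is not yet available for the full index $(\beta,k)$), the two terms with $(\gamma,j)=(\beta,k)$ or $(\gamma',k-j)=(\beta,k)$ cannot be bounded by \eqref{fineestimate} and must be left explicit — these are exactly the two terms $A_1\|[D_x^\beta\partial_t^k(t^ku)]\otimes u\|_{L^p}$ and $A_1\|u\otimes[D_x^\beta\partial_t^k(t^ku)]\|_{L^p}$ on the right of \eqref{fineestimate2}, while every other term has both indices strictly smaller and is handled as above; note in this boundary case the $-k\sum$ in Lemma~\ref{eq:donglemm} only ever involves indices $\le k-1<L$, so it contributes nothing problematic.

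The time-exponent bookkeeping must be checked: for an inner term the factors carry $t^{-|\gamma|/2-3(1/3-1/p_1)/2}$ and $t^{-|\gamma'|/2-3(1/3-1/p_2)/2}$; since $\tfrac1{p_1}+\tfrac1{p_2}=\tfrac1p$ and $|\gamma|+|\gamma'|=|\beta|$, these multiply to $t^{-|\beta|/2 - 3(2/3 - 1/p)/2} = t^{-|\beta|/2 - 1/2 - 3(1/3-1/p)/2}$, which matches the claimed exponent $-\tfrac{|\beta|+1}{2}-\tfrac32(\tfrac13-\tfrac1p)$ — the "extra" $-\tfrac12$ being the gain from the quadratic nonlinearity in the critical space, and this is the reason the exponent on the left of \eqref{fineestimate1} has $|\beta|+1$ rather than $|\beta|$. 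The boundary terms give the same power because $\|u(t)\|_{L^{p_2}}\lesssim t^{-3(1/3-1/p_2)/2}$ contributes the same scaling. The one genuinely delicate point is choosing the H\"older exponents $p_1,p_2$ uniformly so that all relevant norms are controlled — one must stay in the range $3\le p_i<\infty$ (or use $L^6$ via the $X_T$-norm and $L^\infty$ via Theorem~\ref{thm:regularity1} for the undifferentiated factor) — and verifying that the resulting time exponents on the boundary terms are integrable against the heat kernel when these estimates are later plugged into Propositions~\ref{pro:one} and~\ref{pro:two}; I expect the main obstacle to be this uniform choice of exponents together with the careful separation of the $j=0,k$ and $\gamma=0,\beta$ corner cases from the generic terms so that the inductive hypothesis \eqref{fineestimate} is only ever invoked for strictly smaller total order.
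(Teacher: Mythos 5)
Your proposal is correct and matches the paper's own proof in all essentials: the product identity of Lemma~\ref{eq:donglemm} combined with the spatial Leibniz rule and boundedness of $\mathcal{P}$, H\"older with $\tfrac1p=\tfrac1{p_1}+\tfrac1{p_2}$, the induction hypothesis \eqref{fineestimate} for the differentiated factors and Theorem~\ref{thm:regularity1} for the undifferentiated ones, Lemma~\ref{lmm:multisequences} with $\delta=\epsilon=-1$ (via the four-dimensional multi-index $(\beta_1,\beta_2,\beta_3,k)$) to collapse the combinatorial sums, and the isolation of the two corner terms $(\gamma,j)=(0,0),(\beta,k)$ to produce \eqref{fineestimate2}. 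The $M$-power and time-exponent bookkeeping you describe is exactly the paper's.
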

\begin{proof}
From the identity \eqref{eq:timeestimate}, the following inequality holds
\begin{equation}\label{eq:timeestimatenorm}
\begin{aligned}
\left\|D_x^\beta\partial_t^k\left[t^k\mathcal{P}(u\otimes u)(t)\right]\right\|_{L^p}\leq A_1\sum_{j=0}^k\binom{k}{j}\left\|D_x^\beta\left[\partial_t^j\left(t^j u(t)\right)\otimes\partial_t^{k-j}\left(t^{k-j}u(t)\right)\right]\right\|_{L^p}\\
+A_1k\sum_{j=0}^{k-1}\binom{k-1}{j}\left\|D_x^\beta\left[\partial_t^j\left(t^j u(t)\right)\otimes\partial_t^{k-1-j}\left(t^{k-1-j} u(t)\right)\right]\right\|_{L^p}\\
\leq A_1\sum_{0\leq\gamma\leq\beta}\binom{\beta}{\gamma}\sum_{j=0}^k\binom{k}{j}\left\| D_x^{\gamma}\partial_t^j\left(t^j u(t)\right)\otimes D_x^{\beta-\gamma}\partial_t^{k-j}\left(t^{k-j}u(t)\right)\right\|_{L^p}\\
+A_1k\sum_{0\leq\gamma\leq\beta}\binom{\beta}{\gamma}\sum_{j=0}^{k-1}\binom{k-1}{j}\left\|D_x^\gamma\partial_t^j\left(t^ju(t)\right)\otimes D_x^{\beta-\gamma}\partial_t^{k-1-j}\left(t^{k-1-j} u(t)\right)\right\|_{L^p}.
\end{aligned}
\end{equation}

\textbf{Step 1:} Assume $0< |\beta|+k\leq L-1$ and we are going to prove \eqref{fineestimate1} in this step.  
From the assumption \eqref{fineestimate}, H\"oder's inequality for $\frac{1}{p}=\frac{1}{p_1}+\frac{1}{p_2}$, we obtain from \eqref{eq:timeestimatenorm} that
\begin{equation*}
\begin{aligned}
&\left\|D_x^\beta\partial_t^k\left[t^k\mathcal{P}(u\otimes u)(t)\right]\right\|_{L^p}\\
\leq&A_1\sum_{\substack{|\gamma|+j\neq 0,\\|\gamma|+j\neq|\beta|+k}}\binom{k}{j}\binom{\beta}{\gamma}\left\|D_x^\gamma\partial_t^j\left(t^ju(t)\right)\right\|_{L^{p_1}}\left\|D_x^{\beta-\gamma}\partial_t^{k-j}\left(t^{k-j}u(t)\right)\right\|_{L^{p_2}}\\
&+A_1k\sum_{\substack{|\gamma|+j\neq 0,\\|\gamma|+j\neq|\beta|+k-1}}\binom{k-1}{j} \binom{\beta}{\gamma}\left\|D_x^\gamma\partial_t^j\left(t^ju(t)\right)\right\|_{L^{p_1}}\left\|D_x^{\beta-\gamma}\partial_t^{k-1-j}\left(t^{k-1-j}u(t)\right)\right\|_{L^{p_2}}\\
&+A_1\|u(t)\|_{L^{p_1}}\left\|D_x^{\beta}\partial_t^{k}\left(t^{k}u(t)\right)\right\|_{L^{p_2}}+A_1\|D_x^\beta\partial_t^k\left(t^ku(t)\right)\|_{L^{p_1}}\left\|u(t)\right\|_{L^{p_2}}\\
&+A_1k\|u(t)\|_{L^{p_1}}\left\|D_x^{\beta}\partial_t^{k-1}\left(t^{k-1}u(t)\right)\right\|_{L^{p_2}}+A_1k\|D_x^\beta\partial_t^{k-1}\left(t^{k-1}u(t)\right)\|_{L^{p_1}}\left\|u(t)\right\|_{L^{p_2}}\\
\end{aligned}
\end{equation*}
\begin{equation*}
\begin{aligned}
\leq&A_1M^{|\beta|+k-2\delta}\sum_{\substack{|\gamma|+j\neq 0,\\|\gamma|+j\neq|\beta|+k}}\binom{k}{j}\binom{\beta}{\gamma}\times\left(|\gamma|+j\right)^{|\gamma|+j-1}\left[|\beta-\gamma|+(k-j)\right]^{|\beta-\gamma|+(k-j)-1}t^{-\frac{|\beta|+1}{2}-\frac{3}{2}\left(\frac{1}{3}-\frac{1}{p}\right)}\\
&+A_1M^{|\beta|+k-1-2\delta}k\sum_{\substack{|\gamma|+j\neq 0,\\|\gamma|+j\neq|\beta|+k-1}}\binom{k-1}{j}\binom{\beta}{\gamma}\\
&\qquad \times\left(|\gamma|+j\right)^{|\gamma|+j-1}\left[|\beta-\gamma|+(k-1-j)\right]^{|\beta-\gamma|+(k-1-j)-1}t^{-\frac{|\beta|+1}{2}-\frac{3}{2}\left(\frac{1}{3}-\frac{1}{p}\right)}\\
&+A_1M^{|\beta|+k-\delta}(|\beta|+k)^{|\beta|+k-1}t^{-\frac{|\beta|+1}{2}-\frac{3}{2}\left(\frac{1}{3}-\frac{1}{p}\right)}+A_1kM^{|\beta|+k-1-\delta}(|\beta|+k-1)^{|\beta|+k-2}t^{-\frac{|\beta|+1}{2}-\frac{3}{2}\left(\frac{1}{3}-\frac{1}{p}\right)}
\end{aligned}
\end{equation*}
for some constant $A_1$. Consider $\bar\beta=(\beta_1,\beta_2,\beta_3,k)\in\mathbb{N}^{4}~\textrm{and}~ \tilde\beta=(\beta_1,\beta_2,\beta_3,k-1)\in\mathbb{N}^{4}$. Hence, $|\bar\beta|=|\beta|+k$ and $|\tilde\beta|=|\beta|+k-1$, and combining Lemma \ref{lmm:multisequences} gives
\begin{small}
\begin{align*}
&\sum_{\substack{|\gamma|+j\neq 0,\\|\gamma|+j\neq|\beta|+k}}\binom{k}{j}\binom{\beta}{\gamma}\left(|\gamma|+j\right)^{|\gamma|+j-1}\left[|\beta-\gamma|+(k-j)\right]^{|\beta-\gamma|+(k-j)-1}\\
\leq &\sum_{j=0}^k\sum_{0\leq\gamma\leq\beta}\frac{\beta_1!\cdots\beta_3!k!}{(\beta_1-\gamma_1)!\gamma_1!\cdots(\beta_3-\gamma_3)!\gamma_3!(k-j)!j!}\left(|\gamma|+j\right)^{|\gamma|+j-1}\left[|\beta-\gamma|+(k-j)\right]^{|\beta-\gamma|+(k-j)-1}\\
=&\sum_{0\leq\bar\gamma\leq\bar\beta}\binom{\bar\beta}{\bar\gamma}\left|\bar\gamma\right|^{|\bar\gamma|-1}\left|\bar\beta-\bar\gamma\right|^{|\bar\beta-\bar\gamma|-1}\leq\lambda|\bar\beta|^{|\bar\beta|-1},
\end{align*}
\end{small}
and
\begin{multline*}
\sum_{\substack{|\gamma|+j\neq 0,\\|\gamma|+j\neq|\beta|+k-1}}\binom{k-1}{j}\binom{\beta}{\gamma}\left(|\gamma|+j\right)^{|\gamma|+k-1}\left[|\beta-\gamma|+(k-1-j)\right]^{|\beta-\gamma|+(k-1-j)-1}\\
\leq \sum_{0\leq\tilde\gamma\leq\tilde\beta}\binom{\tilde\beta}{\tilde\gamma}\left|\tilde\gamma\right|^{|\tilde\gamma|-1}\left|\tilde\beta-\tilde\gamma\right|^{|\tilde\beta-\tilde\gamma|-1}\leq\lambda|\tilde\beta|^{|\tilde\beta|-1}.
\end{multline*}
Combining all the above inequalities, we obtain
\begin{align*}
&\left\|D_x^\beta\partial_t^k\left[t^k\mathcal{P}(u\otimes u)(t)\right]\right\|_{L^p}\leq A_1\lambda M^{|\beta|+k-2\delta}(|\beta|+k)^{|\beta|+k-1}t^{-\frac{|\beta|+1}{2}-\frac{3}{2}\left(\frac{1}{3}-\frac{1}{p}\right)}\\
&+A_1\lambda k M^{|\beta|+k-1-2\delta}k(|\beta|+k-1)^{|\beta|+k-2}t^{-\frac{|\beta|+1}{2}-\frac{3}{2}\left(\frac{1}{3}-\frac{1}{p}\right)}\\
&+A_1M^{|\beta|+k-\delta}(|\beta|+k)^{|\beta|+k-1}t^{-\frac{|\beta|+1}{2}-\frac{3}{2}\left(\frac{1}{3}-\frac{1}{p}\right)}+A_1kM^{|\beta|+k-1-\delta}(|\beta|+k-1)^{|\beta|+k-2}t^{-\frac{|\beta|+1}{2}-\frac{3}{2}\left(\frac{1}{3}-\frac{1}{p}\right)}\\
&\leq(2A_1\lambda+A_1+A_1M^\delta)M^{|\beta|+k-2\delta}\left(|\beta|+k\right)^{|\beta|+k-1}t^{-\frac{|\beta|+1}{2}-\frac{3}{2}\left(\frac{1}{3}-\frac{1}{p}\right)}.
\end{align*}
Let $N=2\lambda A_1+A_1$ and we obtain \eqref{fineestimate1}.

\textbf{Step 2:} Consider the case: $|\beta|+k=L$. Notice that the terms for $|\gamma|=j=0$ and $\gamma=\beta,~~j=k$ in the summation $I_1$ of \eqref{eq:timeestimatenorm} are 
\[
\left\|D_x^\beta\partial_t^k\left(t^ku(t)\right)\otimes u(t)\right\|_{L^p}+\left\|u(t)\otimes D_x^\beta\partial_t^k\left(t^ku(t)\right)\right\|_{L^p}.
\]
Take these two terms out and by the same arguments as in Step 1 for the rest terms in \eqref{eq:timeestimatenorm}, we obtain \eqref{fineestimate2}.
\end{proof}

\bibliographystyle{plain}
\bibliography{bibofNS}

\end{document}